\newcounter{proposizione}
\newcounter{corollario}
\newcounter{figura}
\newcounter{subsec}
\def\subsec#1{\stepcounter{subsec}\subsection*{\thesubsec{}\ \ #1}}
\def\vvnn#1(#2,#3,#4){\vnode(#2,#3){A_#1}\vnode(#2,#4){B_#1}}
\def\wnn#1(#2,#3,#4){\vnode(5*#2,#3){A_#1}\vnode(5*#2,#4){B_#1}}
\def\f#1#2{\frac{#1}{#2}}
\def\retta{\@ifnextchar[{\retta@}{\retta@@}}
\def\retta@[#1](#2){{\psset{#1}\retta@@(#2)}}
\def\retta@@(#1){%
  \intersection(5*{0,0|#1})(5*{{1,0}|{#1+{90?#1}}})(55,0)(55,1){@a@}
  \intersection(5*{0,0|#1})(5*{{1,0}|{#1+{90?#1}}})(-2,0)(-2,1){@b@}
  \psline(@a@)(@b@)}
\def\R{\@ifnextchar2{\R@}{\ensuremath{\mathbb{R}}\xspace}}
\def\R@2{\ensuremath{\mathbb{R}^2}\xspace}
\def\axx(#1,#2)(#3,#4){\@ifnextchar({\axx@(#1,#2)(#3,#4)}{\axx@(0,0)(#1,#2)(#3,#4)}}
\def\axx@(#1,#2)(#3,#4)(#5,#6){\axes(#1,#2)(#3,#4)(#5,#6){$x$}{}\uput{2pt}[180](#1,#6){$y$}}
\def\axmb(#1,#2)(#3,#4){\axes(0,0)(#1,#2)(#3,#4){$m$}{}\uput{2pt}[180](0,#4){$b$}}
\def\LL{\ensuremath{\mathscr{L}(\R^2)}\xspace}
\newenvironment{dimo}{\begin{proof}[{\rm\bf Proof}]}{\end{proof}}
\newenvironment{dimono}{\noindent{\rm\bf Proof.}}{}
\def\figura{\normalsize\stepcounter{figura}Figure \thefigura.}
\newtheorem*{main}{Theorem (T1)}
\newtheorem*{prop}{\stepcounter{proposizione}Proposition \theproposizione{} (P\theproposizione)}
\newtheorem*{corol}{\stepcounter{corollario}Corollary \thecorollario}
\newtheorem*{pro}{\stepcounter{proposizione}Proposition \theproposizione}
\begin{document}
\begin{center}
  {\huge\bf Parallel Line Segments}\\[5mm]
  {\small author: \bf Antonio Polo}\\
  {\small Italian High School of Rovinj (Croatia),}\\
  {\small University ``Juraj Dobrila'' of Pula (Croatia),\\
    email: {\sf toni.rovigno@gmail.com}}\\[5mm]
  {\bf Abstract}\\[2mm]
  \begin{minipage}{0.8\linewidth}
    In this article I will address some questions about a mathematical problem that my friend Patrizio Frederic, a researcher in
    statistics at the University of Modena, proposed to me. Given some parallel line segments, is there at least one straight line that
    passes through all of them? If there were many lines that solve the problem, can I choose a ``best one'' among all of them? I will
    fully address the first question.  As for the second question, I will illustrate it with some ``experimental'' examples and suggest an
    outline for future explorations.
  \end{minipage}\\[3mm]
  \begin{minipage}{0.75\linewidth}
    {\bf Keywords.} Line segment, straight line, parallel, continuous barycenter, discrete barycenter, linear fuction, operator, point,
    Cartesian coordinate system, determinant, slope.
  \end{minipage}\\
  \begin{pspicture}[unit=0.8mm](-10,4)(64,55)
  \vvnn1(5,5,35)
  \vvnn2(15,20,50)
  \vvnn3(20,15,40)
  \vvnn4(35,30,45)
  \vvnn5(45,15,50)
  \vvnn6(50,10,60)
  \vnode(5*!-1 62 15 div)x
  \vnode(5*!11 28 3 div)y
  \multido{\n=1+1}6{\psline[linewidth=1pt,dotstyle=o,dotsize=3pt]{o-o}(A_\n)(B_\n)}
  \psline[linecolor=red](x+1 12 div*{x>y})(y)
  \psline[linecolor=red,linestyle=dashed](x+1 12 div*{y>x})(x+14 12 div*{x>y})
\end{pspicture}%
\end{center}
\subsec{Introduction}
When the problem have been formulated for the first time to us, the question was exactly this: «Do you know if there exists a quick way to
determine whether we can intersect a series of parallel line segments with a straight line?». In this article, that we have divided into
five sections, we have collected everything we have found on this subject. The first section includes this introduction. The second and the
third are sections of technical nature in which we define the tools that we use, the real functional $\Phi$ and the space of real affine
functions \LL, and we prove their main properties. The fourth is the section in which we prove the main theorem, and we give a
comprehensive answer to the question above. In the last chapter we studied some practical examples, applying the results obtained and also
trying to give some hints to answer a question that have arisen in the meantime in us: «If the straight lines that solve the problem are
many, with what criteria can I choose one of them saying that it is better than the others?».
\subsec{A particular operator}
We begin by defining a tool that allows us to determine which are the respective positions of three points $A(x_A,y_A)$, $B(x_B,y_B)$ and
$C(x_C,y_C)$ on the Cartesian coordinate system. In particular we want to know if the three points are colinear or, alternatively, on what
half-plane is one of the three points with respect to the straight line through the other two. This tool will be a functional defined by
the determinant of a special square matrix of order 3.

For every given three points $A(x_A,y_A)$, $B(x_B,y_B)$ and $C(x_C,y_C)$ on the Cartesian coordinate system, the functional
$\Phi:(\R^2)^3\to\R$ is defined as follows: $$\Phi(A,B,C):=\det\begin{pmatrix}1&1&1\\x_A&x_B&x_C\\y_A&y_B&y_C\end{pmatrix}.$$
We use the multilinearity of the determinant to prove some properties of the functional $\Phi$.
\begin{prop}
  $\Phi$ is invariant for translations.
\end{prop}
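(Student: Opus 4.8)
The plan is to prove $\Phi(A+v,B+v,C+v)=\Phi(A,B,C)$ for an arbitrary translation vector $v=(a,b)$. First I would write out the matrix for the translated points: its first row is still $(1,1,1)$, its second row is $(x_A+a,\,x_B+a,\,x_C+a)$, and its third row is $(y_A+b,\,y_B+b,\,y_C+b)$.

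The key observation is that the new second row is exactly the original second row plus $a$ times the first row (precisely because the first row consists of ones), and likewise the new third row is the original third row plus $b$ times the first row. By multilinearity together with the alternating property of the determinant — which jointly guarantee that adding a scalar multiple of one row to a \emph{different} row leaves the determinant unchanged — these two row operations transform the translated matrix back into the original one without altering its determinant. This gives $\Phi(A+v,B+v,C+v)=\Phi(A,B,C)$ at once.

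Alternatively, working with columns as the text suggests, each column is shifted by the common vector $w=(0,a,b)^{\top}$, so I would expand $\det(c_A+w,\,c_B+w,\,c_C+w)$ by multilinearity into eight determinants. Every term in which $w$ occupies at least two columns has a repeated column and hence vanishes, leaving $\Phi(A,B,C)$ together with three cross terms; I would then verify that these three cross terms cancel, most cleanly by first replacing the second and third columns with $c_B-c_A$ and $c_C-c_A$ (a determinant-preserving column operation), which removes all dependence on $w$.

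I expect the only delicate point to be the bookkeeping rather than any genuine difficulty: in the row approach, keeping clear that ``add a multiple of one row to another'' is itself a consequence of multilinearity combined with the vanishing of a determinant having two equal rows, not an independent axiom; and in the column approach, confirming that the three surviving cross terms truly sum to zero rather than invoking a repeated-column shortcut that does not directly apply here.
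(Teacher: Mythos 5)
Your row-operation argument is essentially identical to the paper's proof: the paper expands the translated matrix by multilinearity in the second and third rows and discards the two extra determinants because each has a repeated row of ones, which is precisely the ``add $a$ (resp.\ $b$) times the first row'' operation you describe, justified the same way. Your alternative column expansion (with the three cross terms cancelling, or the column-subtraction shortcut) is also correct, but it is just a longer route to the same multilinearity-plus-alternating argument and adds nothing over the row version.
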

\begin{dimono}
  Let $\mathcal T:\R2\to\R2$ be a translation given by a vector $\vec v=(a,b)$. We have that
  \begin{gather*}
    \Phi\left(\mathcal T(A),\mathcal T(B),\mathcal T(C)\right)=
    \det\begin{pmatrix}1&1&1\\x_A+a&x_B+a&x_C+a\\y_A+b&y_B+b&y_C+b\end{pmatrix}=
  \end{gather*}
  \begin{gather*}
    =\det\begin{pmatrix}1&1&1\\x_A&x_B&x_C\\y_A+b&y_B+b&y_C+b\end{pmatrix}+
    a\cdot\overbrace{\det\begin{pmatrix}1&1&1\\1&1&1\\y_A+b&y_B+b&y_C+b\end{pmatrix}}^{=0}=\\
    =\det\begin{pmatrix}1&1&1\\x_A&x_B&x_C\\y_A&y_B&y_C\end{pmatrix}+
    b\cdot\overbrace{\det\begin{pmatrix}1&1&1\\x_A&x_B&x_C\\1&1&1\end{pmatrix}}^{=0}=\Phi(A,B,C)\,.\tag*{$\square$}
  \end{gather*}
\end{dimono}
\begin{prop}
  $\Phi$ is invariant for rotations.
\end{prop}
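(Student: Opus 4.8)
The plan is to reduce the general case to a rotation about the origin, and then run the same multilinearity-and-antisymmetry argument used for translations. First I would observe that every rotation is the composite of a translation (moving the centre to the origin), a rotation $\mathcal R$ about the origin, and the inverse translation; since Proposition (P1) already gives invariance under translations, it suffices to prove invariance under $\mathcal R$. Writing $R_\theta=\bigl(\begin{smallmatrix}\cos\theta&-\sin\theta\\\sin\theta&\cos\theta\end{smallmatrix}\bigr)$, the map $\mathcal R$ sends $(x,y)\mapsto(x\cos\theta-y\sin\theta,\;x\sin\theta+y\cos\theta)$, so in the matrix defining $\Phi$ the top row $(1,1,1)$ is untouched while the second and third rows get replaced by $\cos\theta$ and $\sin\theta$ combinations of the original $x$-row $\mathbf x=(x_A,x_B,x_C)$ and $y$-row $\mathbf y=(y_A,y_B,y_C)$.

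Next I would expand $\Phi(\mathcal R(A),\mathcal R(B),\mathcal R(C))$ by the multilinearity of the determinant in its second and third rows, exactly as in the previous proof. This produces four terms: a $\cos\theta\sin\theta$ term with two copies of $\mathbf x$, a symmetric $\sin\theta\cos\theta$ term with two copies of $\mathbf y$, a $\cos^2\theta$ term equal to $\Phi(A,B,C)$, and a $-\sin^2\theta$ term equal to the determinant with rows $\mathbf 1,\mathbf y,\mathbf x$. The two repeated-row determinants vanish, and the last determinant equals $-\Phi(A,B,C)$ by the antisymmetry of the determinant under a row swap. Collecting the survivors gives $(\cos^2\theta+\sin^2\theta)\,\Phi(A,B,C)=\Phi(A,B,C)$, which is the claim.

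The only real obstacle is bookkeeping, specifically getting the sign of the swapped determinant right: it is the factor $-1$ from exchanging the $\mathbf x$- and $\mathbf y$-rows that turns $-\sin^2\theta$ into $+\sin^2\theta$ and lets the Pythagorean identity close the computation; everything else is mechanical. A cleaner repackaging of the same fact, which I would mention as a remark, is that the rotated matrix equals $S M$, where $M$ is the matrix defining $\Phi(A,B,C)$ and $S=\bigl(\begin{smallmatrix}1&0\\0&R_\theta\end{smallmatrix}\bigr)$ is the block-diagonal $3\times3$ matrix; then $\det(SM)=\det S\,\det M=(\det R_\theta)\,\det M=\det M$ since $\det R_\theta=\cos^2\theta+\sin^2\theta=1$, giving the result in one line.
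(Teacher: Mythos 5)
Your proposal is correct, and it in fact contains two proofs: the multilinearity expansion you present as the main argument, and the block-matrix remark at the end, which is precisely the paper's own proof. The paper does it in one line: it writes the coordinate matrix of the rotated points as the product $S\,M$, where $M$ is the matrix defining $\Phi(A,B,C)$ and $S$ is the $3\times3$ block-diagonal matrix with the $2\times2$ rotation block, and then invokes $\det(SM)=\det S\cdot\det M$ together with $\det S=\cos^2\alpha+\sin^2\alpha=1$. Your main argument instead expands the determinant by multilinearity in the second and third rows, discards the two repeated-row terms, and closes with antisymmetry plus the Pythagorean identity; this is exactly the style of computation the paper uses for translations (P1), so it is a sound and more elementary route, trading the single appeal to multiplicativity of the determinant for a four-term bookkeeping exercise whose signs you track correctly. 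One point where your write-up is more careful than the paper: the matrix identity only represents rotations \emph{about the origin}, and your explicit reduction of an arbitrary rotation to that case by conjugating with translations (using P1) fills a step the paper leaves implicit.
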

\begin{dimono}
  Let $\EuScript R:\R2\to\R2$ be a rotation with rotation angle $\alpha$. We have that
  \begin{gather*}
    \Phi\left(\EuScript R(A),\EuScript R(B),\EuScript R(C)\right)=
    \det\left[\begin{pmatrix}1&0&0\\0&\cos\alpha&-\sin\alpha\\0&\sin\alpha&\cos\alpha\end{pmatrix}\cdot
      \begin{pmatrix}1&1&1\\x_A&x_B&x_C\\y_A&y_B&y_C\end{pmatrix}\right]=\\
    =\overbrace{\det\begin{pmatrix}1&0&0\\0&\cos\alpha&-\sin\alpha\\0&\sin\alpha&\cos\alpha\end{pmatrix}}^{=1}\cdot
    \det\begin{pmatrix}1&1&1\\x_A&x_B&x_C\\y_A&y_B&y_C\end{pmatrix}=\Phi(A,B,C)\,.\tag*{$\square$}
  \end{gather*}
\end{dimono}
\begin{prop}
  Three points $A$, $B$ and $C$ on the Cartesian coordinate system are colinear iff $\Phi(A,B,C)=0$.
\end{prop}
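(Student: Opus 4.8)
The plan is to prove the equivalence by a direct computation that links the vanishing of $\Phi(A,B,C)$ to the standard collinearity condition. First I would recall that three points are collinear precisely when the vectors $\vec{AB}$ and $\vec{AC}$ are linearly dependent, i.e. when
\[
(x_B-x_A)(y_C-y_A)-(x_C-x_A)(y_B-y_A)=0.
\]
The key observation is that this expression is exactly $\Phi(A,B,C)$. To see this cleanly, I would expand the determinant by cofactors along the first row, obtaining
\[
\Phi(A,B,C)=(x_B y_C - x_C y_B)-(x_A y_C - x_C y_A)+(x_A y_B - x_B y_A),
\]
and then check that this matches the expanded bracketed product above. Thus $\Phi(A,B,C)=0$ if and only if $\vec{AB}$ and $\vec{AC}$ are proportional, which is the definition of collinearity.

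A slicker route, and the one I would prefer to present, uses the invariance results already proved in the two preceding propositions. Since $\Phi$ is invariant under translations (P1) and rotations (P2), I may apply a rigid motion carrying $A$ to the origin and $B$ to a point on the $x$-axis without changing the value of $\Phi$. After this normalization the points become $A'=(0,0)$, $B'=(b,0)$ and $C'=(c_1,c_2)$, and the determinant collapses to a $2\times 2$ minor equal to $b\,c_2$. In this reduced configuration collinearity of $A'$, $B'$, $C'$ means that $C'$ lies on the $x$-axis, i.e. $c_2=0$ (treating the degenerate case $b=0$, where $A=B$, separately as a trivial instance of collinearity). Hence $\Phi(A,B,C)=\Phi(A',B',C')=b\,c_2$ vanishes exactly when the three points are collinear.

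The main subtlety to handle carefully is the logical direction and the degenerate cases rather than any hard algebra: I must ensure that ``$\Phi=0$'' genuinely characterizes collinearity in \emph{both} directions, including when two of the three points coincide. The forward direction (collinear $\Rightarrow$ $\Phi=0$) follows because coincident or aligned points produce a linearly dependent pair of rows after the translation step; the reverse direction ($\Phi=0$ $\Rightarrow$ collinear) is where the reduction $\Phi=b\,c_2$ makes the argument transparent, since a vanishing product forces either $b=0$ or $c_2=0$, both of which place the three points on a common line. I expect no genuine obstacle here, only the need to state the degenerate alignment cases explicitly so the ``iff'' is airtight.
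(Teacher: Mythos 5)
Your proposal is correct, and the route you say you would prefer to present is essentially the paper's own proof: the paper likewise dismisses coincident points as obvious, normalizes by a rigid motion (using P2 to rotate so that $y_A=y_B$), and reads the equivalence off the factorization $\Phi(A,B,C)=(x_A-x_B)(y_A-y_C)$, exactly parallel to your $\Phi(A,B,C)=b\,c_2$. The only differences are cosmetic: the paper skips the translation step (P1) and handles the degenerate case up front under the assumption that the points are distinct, whereas you fold it into the vanishing of the product $b\,c_2$, which is arguably a slightly tidier way to make the ``iff'' airtight. Your first, purely computational route --- matching the cofactor expansion of $\Phi$ against the cross-product condition $(x_B-x_A)(y_C-y_A)-(x_C-x_A)(y_B-y_A)=0$ --- does not appear in the paper; it is also valid, and its advantage is that it needs neither P1 nor P2, at the cost of replacing the geometric normalization by an opaque algebraic identity.
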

\begin{dimo}
  If two of the three points coincide then the proposition is obvious, therefore we can assume that the three points are distinct.
  For the proposition P2 is not restrictive to assume also that the points $A$ and $B$ are colinear with the $x$-axis, i.e. $y_A=y_B$. We
  have that $$\Phi(A,B,C)=\det\begin{pmatrix}1&1&1\\x_A&x_B&x_C\\y_A&y_A&y_C\end{pmatrix}=(x_A-x_B)(y_A-y_C)\;.$$
  For the assumption that the points are distinct we have that $x_A\ne x_B$, then $\Phi(A,B,C)=0$ iff $y_A=y_C$, i.e. iff the points $A$,
  $B$ and $C$ are colinear.
\end{dimo}
\begin{prop}
  Let $A$, $B$ and $C$ be three points on the Cartesian plane, distinct and not colinear, and let $\mathscr C$ be the circle circumscribed
  to the triangle $ABC$, then $\Phi(A,B,C)>0$ iff, along $\mathscr C$, the points $A$, $B$ and $C$ are ordered counterclockwise.
  Conversely $\Phi (A,B,C)<0$ iff along $\mathscr C$ the points $A$, $B$ and $C$ are ordered clockwise.
\end{prop}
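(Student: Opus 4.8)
The plan is to reduce the configuration to a normal form using the invariances already proved, compute $\Phi$ explicitly there, and only then convert the resulting half-plane condition into the cyclic ordering on the circle. First I would normalise: since translations (P1) and rotations (P2) leave $\Phi$ unchanged and, being orientation-preserving isometries, also carry the circumscribed circle to the circumscribed circle while preserving the counterclockwise sense, it is not restrictive to assume $A=(0,0)$ and $B=(d,0)$ with $d=|AB|>0$, so that $A,B$ lie on the $x$-axis with $A$ to the left of $B$. Writing $C=(p,q)$, the hypothesis of non-colinearity gives $q\neq 0$. Exactly as in the proof of P3 the determinant then collapses to $\Phi(A,B,C)=(x_A-x_B)(y_A-y_C)=d\,q$, and since $d>0$ the sign of $\Phi$ coincides with the sign of $q$. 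Hence $\Phi(A,B,C)>0$ iff $C$ lies in the open upper half-plane.

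It remains to identify ``$C$ in the upper half-plane'' with ``$A,B,C$ ordered counterclockwise along $\mathscr C$'', and this is the step I expect to be the real obstacle, being the only genuinely geometric rather than algebraic part. The chord $AB$ lies on the $x$-axis and meets $\mathscr C$ in precisely the two points $A$ and $B$, cutting the circle into an upper arc (contained in $q>0$) and a lower arc; thus $C$ lies on the upper arc iff $q>0$, and one must check that a counterclockwise traversal of $\mathscr C$ visits the three points in the order $A,B,C$ exactly in that case. A clean way to make this rigorous, avoiding a case analysis on whether the circumcentre lies above or below $AB$, is to apply P1 to move the circumcentre to the origin and write $A=r(\cos\alpha,\sin\alpha)$, $B=r(\cos\beta,\sin\beta)$, $C=r(\cos\gamma,\sin\gamma)$ with $r>0$; factoring $r$ out of the last two rows gives
\[\Phi(A,B,C)=r^{2}\bigl[\sin(\beta-\alpha)+\sin(\gamma-\beta)+\sin(\alpha-\gamma)\bigr]=-4r^{2}\sin\tfrac{\beta-\alpha}{2}\sin\tfrac{\gamma-\beta}{2}\sin\tfrac{\alpha-\gamma}{2},\]
whose sign can be read off directly from the cyclic arrangement of the polar angles $\alpha,\beta,\gamma$ of the points on the circle. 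Either route settles the counterclockwise case.

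The clockwise case then requires no extra work. Interchanging $B$ and $C$ swaps two columns of the defining matrix and so negates $\Phi$, while it simultaneously reverses the sense in which the triple is read along $\mathscr C$; applying the already-proven equivalence to the triple $(A,C,B)$ gives $\Phi(A,C,B)>0$ iff $A,C,B$ are counterclockwise, i.e. iff $A,B,C$ are clockwise, and since $\Phi(A,C,B)=-\Phi(A,B,C)$ this yields $\Phi(A,B,C)<0$ iff $A,B,C$ are ordered clockwise. The whole difficulty is thus concentrated in the orientation bookkeeping on the circle; the determinant manipulations are routine and parallel those of P3.
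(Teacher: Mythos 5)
Your proposal is correct, and its core (the polar-angle route) is a close cousin of the paper's proof but genuinely more algebraic in its handling of the one delicate step. The paper also moves the circumcenter to the origin via P1, but then spends a second normalization (P2) to make the chord $AB$ vertical, parametrizes $A(R\cos\alpha,-R\sin\alpha)$, $B(R\cos\alpha,R\sin\alpha)$, $C(R\cos\gamma,R\sin\gamma)$, computes $\Phi=2R^2\sin\alpha(\cos\alpha-\cos\gamma)$, and then establishes the key equivalence ``counterclockwise $\Leftrightarrow\cos\alpha>\cos\gamma$'' by inspecting Figures 1 and 2. Your factorization $\Phi=-4r^{2}\sin\tfrac{\beta-\alpha}{2}\sin\tfrac{\gamma-\beta}{2}\sin\tfrac{\alpha-\gamma}{2}$ is correct (for $u+v+w=0$ one indeed has $\sin u+\sin v+\sin w=-4\sin\tfrac u2\sin\tfrac v2\sin\tfrac w2$); it keeps the three points on an equal footing, needs no second normalization, and replaces the figure-based orientation step by a pure sign check, while your column-swap argument disposes of the clockwise half without repeating any work. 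The one thing you should write out rather than assert is precisely that sign check, since the individual half-angle sines depend on the chosen representatives of $\alpha,\beta,\gamma$ (only their product is well defined): fix $\alpha=0$ and $\beta,\gamma\in(0,2\pi)$, so that
\[
\Phi=4r^{2}\sin\tfrac{\beta}{2}\,\sin\tfrac{\gamma}{2}\,\sin\tfrac{\gamma-\beta}{2}\,;
\]
the first two factors are positive, and the third is positive iff $\gamma>\beta$, i.e.\ iff the counterclockwise traversal starting at $A$ meets $B$ before $C$. With that line added your argument is complete, and arguably more rigorous than the paper's, whose orientation bookkeeping rests on the figures. (Your first route, reducing to the sign of the ordinate of $C$ with $A,B$ on the $x$-axis, is also sound but leaves exactly the same arc-versus-order verification pending; the polar-angle computation is the cleaner way to finish, so it is right that you made it the backbone of the proof.)
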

\begin{dimono}
  For P1 is not restrictive to assume that the circumcenter of the triangle $ABC$ coincides with the origin of the Cartesian plane and
  for P2 is not restrictive to assume that the points $A$ and $B$ are aligned with the $y$-axis , so that $x_A=x_B$ and $y_A<y_B$. Let $R$
  be the radius of the circle circumscribed to the triangle $ABC$, then for the above assumptions, there are two angles $\alpha,\gamma$
  satisfying $0<\alpha<\pi$, $0\le\gamma<2\pi$, $\gamma\ne\pm\alpha$ and such that $A(R\cos\alpha,-R\sin\alpha)$,
  $B(R\cos\alpha,R\sin\alpha)$ and $C(R\cos\gamma,R\sin\gamma)$ are the Cartesian coordinates of the points $A$, $B$ and $C$.
  \begin{center}
    \begin{pspicture}[linewidth=0.5pt,framesep=0pt](0,-29)(121,25)
  \intersection(20;70)(20;-70)(-25,25)(25,25)A
  \multido{\n=32+57,\na=145+180,\nu=1+-2}{2}{\rput(\n,0){%
    \multido{\n=-24+4}{13}{\psline[linecolor=lightgray](\n,-25)(\n,25)\psline[linecolor=lightgray](-25,\n)(25,\n)}%
    \axx(-25,-25)(25,25)%
    \uput*{1pt}[117.5](20;117.5){$\mathscr C$}
    \rput*[l](-23.7,-22.3){$\cos\alpha\ifnum\nu=-1<\else>\fi\cos\gamma$}%
    \rput[l](-25,-27){\figura}%
    \psline[linestyle=dashed](20;-70)(20;70)%
    \psline[linestyle=dashed](20;\na|0,0)(20;\na)%
    \point*(20;\na)(20;\na|0,0)(20;\na){$\scriptstyle{\cos\gamma}$}
    \point*(20;70|{\nu*20;70})(20;70|0,0)(20;70|{\nu*20;70}){$\scriptstyle{\cos\alpha}$}
    \point*(0,0)(20;-70)(0,0){$A$}%
    \point*(0,0)(20;\na)(0,0){$C$}%
    \point*(0,0)(20;70)(0,0){$B$}}}%
  \rput(32,0){\psarc[arcsep=4pt]{->}(0,0){20}{145}{-70}%
    \psarc[arcsep=4pt]{->}(0,0){20}{70}{145}%
    \psarc[arcsep=4pt]{->}(0,0){20}{-70}{70}}%
  \rput(89,0){\psarc[arcsep=4pt]{<-}(0,0){20}{-35}{70}%
    \psarc[arcsep=4pt]{<-}(0,0){20}{-70}{-35}%
    \psarc[arcsep=4pt]{<-}(0,0){20}{70}{-70}}%
\end{pspicture}%
  \end{center}
  In Figures 1 and 2 we highlight the fact that if the points $A$, $B$ and $C$ are ordered counterclockwise, then
  $\cos\alpha>\cos\gamma$, while if the points $A$, $B$ and $C$ are ordered clockwise, then $\cos\alpha<\cos\gamma$. But by calculating 
  $\Phi(A,B,C)$ we have that
  \begin{gather*}
    \Phi(A,B,C)=\det\begin{pmatrix}1&1&1\\R\cos\alpha&R\cos\alpha&R\cos\gamma\\
      -R\sin\alpha&R\sin\alpha&R\sin\gamma\end{pmatrix}=2R^2\sin\alpha(\cos\alpha-\cos\gamma)
  \end{gather*}
  and since $2R^2\sin\alpha>0$, we conclude that:
  \begin{align*}
    &\Phi(A,B,C)>0\!\quad\Leftrightarrow\!\quad\cos\alpha>\cos\gamma\!\quad\Leftrightarrow\!\quad\text{the orientation is
    counterclockwise,}\\
    &\Phi(A,B,C)<0\!\quad\Leftrightarrow\!\quad\cos\alpha<\cos\gamma\!\quad\Leftrightarrow\!\quad\text{the orientation is
    clockwise.}\tag*{$\square$}
  \end{align*}
\end{dimono}%
To conclude this section, always regarding the functional $\Phi$, we make one final point that will be useful below.
\begin{prop}
  Let $A$, $B$, $C$ and $D$ be four points given on the Cartesian plane, positioned so that $x_B<x_C$ and $x_A=x_D$. The following
  conditions are equivalent:
  \begin{enumerate}[\rm i)]
  \item $y_A\ge y_D$,
  \item $\Phi(A,B,C)\ge\Phi(D,B,C)$,
  \item $\Phi(B,A,C)\le\Phi(B,D,C)$,
  \item $\Phi(B,C,A)\ge\Phi(B,C,D)$.
  \end{enumerate}
  Moreover in all four cases equality holds iff $A\equiv D$.
\end{prop}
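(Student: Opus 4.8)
The plan is to exploit the multilinearity of $\Phi$ in its columns, together with the standing hypothesis $x_A=x_D$. The crucial quantity is the difference $\Phi(A,B,C)-\Phi(D,B,C)$. Expanding the determinant along the first column isolates the dependence on the coordinates of the first argument; since $A$ and $D$ share the same abscissa, the terms carrying $x_A=x_D$ cancel in the subtraction and only the contribution of the ordinates survives. A short computation should then give
$$\Phi(A,B,C)-\Phi(D,B,C)=(y_A-y_D)(x_C-x_B).$$

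With this identity in hand, the equivalence i) $\Leftrightarrow$ ii) is immediate: by hypothesis $x_B<x_C$, so the factor $x_C-x_B$ is strictly positive, and therefore the sign of the difference coincides with the sign of $y_A-y_D$. Thus $\Phi(A,B,C)\ge\Phi(D,B,C)$ holds precisely when $y_A\ge y_D$.

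Next I would reduce iii) and iv) to ii) using the behaviour of a determinant under column permutations. Swapping two arguments of $\Phi$ is a transposition of columns and flips the sign, so $\Phi(B,A,C)=-\Phi(A,B,C)$ and $\Phi(B,D,C)=-\Phi(D,B,C)$; the permutation in iv) is cyclic, hence even, so $\Phi(B,C,A)=\Phi(A,B,C)$ and $\Phi(B,C,D)=\Phi(D,B,C)$. Substituting these relations turns iv) directly into ii), while in iii) the common sign change reverses the inequality, which is exactly why iii) is stated with $\le$ rather than $\ge$.

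Finally, the equality clause follows from the same identity: since $x_C-x_B>0$, the product $(y_A-y_D)(x_C-x_B)$ vanishes iff $y_A=y_D$, and combined with $x_A=x_D$ this says precisely $A\equiv D$. I do not expect a genuine obstacle here; the entire argument rests on the single linear identity above, and the only point requiring care is the sign bookkeeping in the permutation step for iii) and iv), to make sure the inequality directions come out as stated.
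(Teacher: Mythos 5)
Your proposal is correct and follows essentially the same route as the paper: the same key identity $\Phi(A,B,C)-\Phi(D,B,C)=(y_A-y_D)(x_C-x_B)$, obtained from linearity in the first column using $x_A=x_D$, with the sign of $x_C-x_B$ giving i) $\Leftrightarrow$ ii) and the equality clause, and with iii) and iv) reduced to ii) by the alternating property of the determinant. Your explicit sign bookkeeping for the transposition and the cyclic permutation is just a spelled-out version of what the paper dismisses in one line.
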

\begin{dimo}
  ($\bm{{\rm i}\Leftrightarrow {\rm ii}}$) We compute:
  \begin{gather*}
    \Phi(A,B,C)-\Phi(D,B,C)=\det\begin{pmatrix}1&1&1\\x_A&x_B&x_C\\y_A&y_A&y_C\end{pmatrix}-
    \det\begin{pmatrix}1&1&1\\x_A&x_B&x_C\\y_D&y_A&y_C\end{pmatrix}=\\
    =\det\begin{pmatrix}0&1&1\\0&x_B&x_C\\y_A-y_D&y_A&y_C\end{pmatrix}=(y_A-y_D)(x_C-x_B)\;,
  \end{gather*}
  by assumption $x_C-x_B>0$, thus $\Phi(A,B,C)\ge\Phi(D,B,C)\Leftrightarrow y_A\ge y_D$, and moreover
  $\Phi(A,B,C)=\Phi(D,B,C)\Leftrightarrow y_A=y_D\Leftrightarrow A\equiv D$.\\[1mm]
  ($\bm{{\rm ii}\Leftrightarrow{\rm iii}\Leftrightarrow{\rm iv}}$) These equivalences follow from the fact that the determinant is
  alternating (or skew-symmetric) as well as multilinear.
\end{dimo}
\subsec{The space of affine functions}
Another tool we use is the space \LL, wherein we represent the real affine functions, i.e. the functions $f:\R\to\R$ such that $f(x)=mx+b$ 
for some $(m,b)\in\R2$. 

The elements of \LL are in one-to-one correspondence with the straight lines on the Cartesian plane that are not parallel to the
$y$-axis, these are in one-to-one correspondence with the equations of the form $y=mx+b$ and then, by the pair $(m,b)$, they are in
one-to-one correspondence with the pairs of \R2. This one-to-one correspondence has some interesting properties as shown in the next
proposition.

We use also the concept of ``sheaf of straight lines'', that is the set of straight lines all passing through the same point (except almost
that parallel to the $y$-axis) or all parallel to each other.
\begin{pro}
  Let \LL be the space of affine real functions. In \LL we have that the following statements occur.\\[3pt]
  {\bf(L1)} The set of all sheaves of straight lines on the Cartesian plane is in one-to-one correspondence with the set of straight lines
  of  \LL and, in particular, a straight line in \LL is parallel to the $y$-axis if and only if its associated sheaf is composed by parallel
  straight lines.\\[3pt]
  {\bf(L2)} The set of straight lines passing through a line segment parallel to the $y$-axis is convex in \LL and, in particular, it
  consists of  the points of the plane between two parallel straight lines.\\[3pt] 
  {\bf(L3)} The set of straight lines passing through the points of two line segments parallel to the $y$-axis, but with distinct 
  $x$-coordinate, is limited and convex in \LL and, in particular, it is a parallelogram whose barycenter is associated with the straight
  line passing through the midpoints of the two line segments.
\end{pro}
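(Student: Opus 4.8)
The plan is to treat the three claims in sequence, since each builds on the previous, and the single computational engine behind all of them is the point--line duality: the line $y=mx+b$, i.e. the point $(m,b)\in\LL$, passes through a plane point $(x_0,y_0)$ if and only if $b=-x_0m+y_0$. Read as an equation in the unknowns $(m,b)$, this describes a line of $\LL$ (non-vertical, of slope $-x_0$). For (L1) I would make the bijection explicit. To the sheaf through a finite point $P=(x_0,y_0)$ I associate the line $\ell_P:=\{(m,b):b=-x_0m+y_0\}$; the duality shows $\ell_P$ is exactly the set of plane lines through $P$, and $P\mapsto\ell_P$ is injective with image all non-vertical lines of $\LL$, since the slope $-x_0$ and intercept $y_0$ recover $P$. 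To the sheaf of all lines of fixed slope $m_0$ I associate the vertical line $\{m=m_0\}$. These two families are disjoint and together exhaust all sheaves and all lines of $\LL$, giving the bijection, and a line of $\LL$ is parallel to the $y$-axis precisely when its sheaf is a family of parallels.

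For (L2), a segment parallel to the $y$-axis is $\sigma=\{(x_0,y):\eta_1\le y\le\eta_2\}$. By the duality a plane line meets $\sigma$ iff $mx_0+b\in[\eta_1,\eta_2]$, i.e. iff $(m,b)$ satisfies the double inequality $\eta_1\le mx_0+b\le\eta_2$. This is the intersection of the two half-planes bounded by the parallel lines $b=-x_0m+\eta_i$, hence a closed strip in $\LL$; as an intersection of half-planes it is convex, which is the assertion.

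For (L3) I would apply (L2) to each of the two segments $\sigma_j=\{(x_j,y):\eta_j'\le y\le\eta_j''\}$, $j=1,2$, obtaining two strips $S_1,S_2\subset\LL$ whose boundary lines have slopes $-x_1$ and $-x_2$. Since $x_1\ne x_2$ these slopes differ, so the strips are transversal and $S_1\cap S_2$ is a bounded, convex parallelogram, which is the structural claim. For the barycenter I would argue via symmetry: each strip $S_j$ is invariant under the point reflection about any point of its median line $\mu_j:=\{mx_j+b=(\eta_j'+\eta_j'')/2\}$, because that reflection sends the value $mx_j+b$ to its mirror image inside $[\eta_j',\eta_j'']$. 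Hence the intersection point $c=\mu_1\cap\mu_2$ is a center of symmetry of $S_1\cap S_2$, and for a parallelogram the center of symmetry is the barycenter. By construction $c=(m^*,b^*)$ satisfies $m^*x_j+b^*=(\eta_j'+\eta_j'')/2$ for both $j$, so the line $y=m^*x+b^*$ passes through the midpoints of both segments, exactly as (L3) asserts.

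The one step requiring genuine care is the barycenter identification in (L3). One must be sure that the centroid of the parallelogram region really is its center of symmetry, and that this center is the intersection $\mu_1\cap\mu_2$ rather than some point weighted by the side lengths; the symmetry argument settles this cleanly and avoids an explicit averaging of the four vertices, but it is the place where the meaning of ``barycenter'' has to be pinned down precisely.
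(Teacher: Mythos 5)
Your proposal is correct, and for (L1) and (L2) it coincides in substance with the paper's argument: both rest on the duality that the point $(m,b)\in\LL$ corresponds to a line through $(x_0,y_0)$ exactly when $b=-x_0m+y_0$, so that a sheaf through a point becomes a non-vertical line in \LL, a sheaf of parallels becomes a vertical line, and the lines through a vertical segment become a closed strip between two parallel lines. Where you genuinely diverge is (L3). The paper proceeds by brute force: it writes out the four vertices of the parallelogram in \LL as the dual points of the lines $AC$, $AD$, $BC$, $BD$ joining the endpoints of the two segments, takes the barycenter to be the midpoint of the diagonal joining the duals of $AC$ and $BD$, averages coordinates, and checks that the resulting pair equals the explicitly computed slope and intercept of the line $EF$ through the two midpoints. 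You instead observe that each strip $S_j$ is preserved by the point reflection about any point of its median line $\mu_j$ (since the affine functional $mx_j+b$ is sent to $(\eta_j'+\eta_j'')-(mx_j+b)$), so the point $c=\mu_1\cap\mu_2$ is a center of symmetry of $S_1\cap S_2$; since the centroid of a bounded region is fixed by every affine symmetry of that region and a point reflection fixes only its center, the centroid is $c$, whose dual is precisely the line through the two midpoints. Your route is computation-free, identifies ``barycenter of the parallelogram'' with ``center of symmetry'' in a way that does not depend on which diagonal one picks, and would survive generalizations where explicit coordinates are painful; the paper's computation is more elementary (no invariance principle needed) and yields the barycenter's coordinates in closed form, which is in the same concrete spirit as the worked examples later in the paper. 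The only point you should make fully explicit is the step you yourself flag: that the centroid is equivariant under affine maps preserving the region, so that a center of symmetry, when it exists, must be the centroid; one sentence suffices, but without it the identification of $c$ with the barycenter is asserted rather than proved.
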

\begin{dimo}
  {\bf(L1)} Let $P(x_P,y_P)\in\R2$ be a generic point, then the sheaf of straight lines passing through $P$ has explicit equation
  $y=m(x-x_P)+y_P$ and for all different $m\in\R$, in \LL the sheaf is represented by the locus of points whose coordinates are
  $(m,-x_Pm+y_P)$. This locus is a straight line whose explicit equation is $y=-x_Px+y_P$, then it is not parallel to the $y$-axis. However,
  if we have a sheaf of parallel straight lines in \R2, we can express its equation in the form $y=kx+b$, where $b$ is a parameter and $k$
  is constant. For all $b\in\R$ this sheaf is represented in \LL by the locus of points whose coordinates are $(k,b)$. Such a locus
  is a straight line, parallel to the $y$-axis, whose equation is $x=k$.
  \begin{center}
    \begin{pspicture}[c,linewidth=0.6pt,framesep=0pt](-25,-13)(30,30)%
  \multido{\ny=-8+4,\nx=-20+4}{13} {%
    \ifnum\ny>29{}\else\psline[linecolor=lightgray](-22,\ny)(30,\ny)\fi%
    \psline[linecolor=lightgray](\nx,-10)(\nx,30)}%
  \rput[l](-22,-12){\figura}%
  \axx(-22,-10)(30,30)%
  \psline[linewidth=2pt,linecolor=blue](4*1,1)(4*1,4)%
  \point*(4*1,4)(4*1,1)(4*1,4){$B$}%
  \point*(4*1,1)(4*1,4)(4*1,1){$A$}%
  \uput*{2pt}[180](0,4){$\scriptstyle1$}%
  \uput*{2pt}[270](4,0){$\scriptstyle1$}%
  \intersection(0,20)(8,8)(0,-10)(1,-10)A%
  \intersection(0,20)(8,8)(0,30)(1,30)B%
  \psline[linestyle=dashed](A)(B)
  \psline(A+0.1*{A>B})(B+0.1*{B>A})
  \pcline[linestyle=none,offset=-5pt](B)(B+0.25*{B>A})\lput*{:U}{$\scriptscriptstyle E:\,y=-\frac32x+5$}
  \intersection(-16,0)(-8,4)(30,0)(30,1)A%
  \intersection(-16,0)(-8,4)(-22,0)(-22,1)B%
  \psline[linestyle=dashed](A)(B)
  \psline(A+0.1*{A>B})(B+0.1*{B>A})
  \pcline[linestyle=none,offset=5pt](B+0.75*{B>A})(A)\lput*{:U}{$\scriptscriptstyle D:\,y=\frac12x+2$}
  \intersection(28,-8)(-20,24)(30,0)(30,1)A%
  \intersection(28,-8)(-20,24)(-22,0)(-22,1)B%
  \psline[linestyle=dashed](A)(B)
  \psline(A+0.1*{A>B})(B+0.1*{B>A})
  \pcline[linestyle=none,offset=5pt](B)(B+0.25*{B>A})\lput*{:U}{$\scriptscriptstyle F:\,y=-\frac23x+\frac83$}
  \intersection(0,12)(4,12)(30,0)(30,1)A%
  \intersection(0,12)(4,12)(-22,0)(-22,1)B%
  \psline[linestyle=dashed](A)(B)
  \psline(A+0.1*{A>B})(B+0.1*{B>A})
  \pcline[linestyle=none,offset=5pt](B)(B+0.15*{B>A})\lput*{:U}{$\scriptscriptstyle C:\,y=3$}%
\end{pspicture}
  \intersection(0,4)(4,0)(0,-10)(1,-10)A%
  \intersection(0,4)(4,0)(0,30)(1,30)B%
  \intersection(0,16)(16,0)(0,-10)(1,-10)C%
  \intersection(0,16)(16,0)(0,30)(1,30)D%
  \pspolygon[linestyle=none,fillstyle=solid,fillcolor=Bluartico](A)(B)(D)(C)
  \multido{\ny=-8+4,\nx=-24+4}{13}{%
    \ifnum\ny>29{}\else\psline[linecolor=lightgray](-26,\ny)(26,\ny)\fi%
    \psline[linecolor=lightgray](\nx,-10)(\nx,30)}
  \axx(-26,-10)(26,30)%
  \uput*{2pt}[202.5](0,4){$\scriptstyle1$}%
  \uput*{2pt}[22.5](0,16){$\scriptstyle4$}%
  \uput*{2pt}[247.5](4,0){$\scriptstyle1$}%
  \psline[linestyle=dashed](A)(B)
  \psline(A+0.1*{A>B})(B+0.1*{B>A})
  \psline[linestyle=dashed](C)(D)
  \psline(C+0.1*{C>D})(D+0.1*{D>C})
  {\psset{fillcolor=Bluartico}
    \uput*{1pt}[140](4*0,3){$\scriptstyle C$}
    \uput*{1pt}[-5](2,8){$\scriptstyle D$}
    \uput*{2pt}[185](-6,20){$\scriptstyle E$}
    \uput*[275](4 3 div*-2,8){$\scriptstyle F$}}
  \psdots[dotstyle=o](4*0,3)(2,8)(-6,20)(4 3 div*-2,8)
\end{pspicture}%
  \end{center}
  {\bf(L2)} Let $\overline{AB}$ be a line segment parallel to the $y$-axis whose end-points are $A(a,b)$ and $B(a,c)$ (Figure 3). A generic
  point $P_t$ of this segment has coordinates of type $P_t\big(a,b+t(c-b)\big)$ for some $t\in[0,1]$. As seen in (L1), the sheaf of
  straight lines passing through $P_t$ is represented in \LL by the straight lines with equation $y=-ax+b+t(c-b)$, for all
  $t\in[0,1]$ this straight lines represent the set of all parallel lines between the lines $y=-ax+b$ and $y=-ax+c$ and then they form a
  convex set in \LL.\\[2pt] 
  {\bf(L3)} For L2 it is clear that the set considered is a parallelogram.~Then let $A(a,b)$, $B(a,c)$, $C(d,e)$ and $D(d,f)$ be the
  end-points of the two line segments and let $E\big(a,\frac{b+c}2\big)$ and $F\big(d,\frac{e+f}2\big)$ be the midpoints of
  $\overline{AB}$ and $\overline{CD}$, respectively. The equation of the straight line $EF$ will be
$$y=\frac{b+c-e-f}{2(a-d)}(x-d)+\frac{e+f}2=\frac{b+c-e-f}{2(a-d)}x+\frac{ae+af-bd-cd}{2(a-d)}\,.$$
  The equations of the straight lines $AC$, $AD$, $BC$ and $BD$ are:
  \begin{align*}
    AC:&\,y=\frac{b-e}{a-d}x+\frac{ae-bd}{a-d}\quad,&AD:&\,y=\frac{b-f}{a-d}x+\frac{af-bd}{a-d}\;,\\
    BC:&\,y=\frac{c-e}{a-d}x+\frac{ae-cd}{a-d}\quad\text{ e}&BD:&\,y=\frac{c-f}{a-d}x+\frac{af-cd}{a-d}\;.
  \end{align*}
  These four straight lines, in \LL, represent the vertices of the parallelogram corresponding to all straight lines passing through $AB$
and
  $CD$. The barycenter of this parallelogram is the midpoint of one of its diagonals, which is the point 
  $$P\left(\frac{\frac{b-e}{a-d}+\frac{c-f}{a-d}}{2},\frac{\frac{ae-bd}{a-d}+\frac{af-cd}{a-d}}{2}\right)\;
  \Rightarrow\;P\left(\frac{b-e+c-f}{ 2(a-d)},\frac{ae-bd+af-cd}{2(a-d)}\right)$$ whose coordinates are exactly the slope and the
  $y$-intercept of the straight line $EF$.
\end{dimo}
\subsec{The problem}
The problem we want to solve is the following: given a finite number, $n\ge2$, of line segments which are closed, parallel, but
mutually non-aligned, find necessary and sufficient conditions for having at least one straight line that intersects all
segments.\vskip3mm\noindent
{\bf Notations} (Figure 4){\bf.}
\begin{itemize}
  \item Let the line segments be on a Cartesian coordinate system, so that they are parallel to the $y$-axis, and let's call them
   $L_1,L_2,\ldots,L_n$ in ascending order according to the $x$-axis.
  \item Let $A_i(x_i,a_i)$ and $B_i(x_i,b_i)$ be the end-points of the $i$-th line segment, with $a_i\le b_i$ for all $i\in\{1,2,\dots,n\}$
   and $x_1<x_2<\ldots<x_n$.
  \item Among all the straight lines passing through the end-points of the line segments we fix two particular ones: 
  the straight line $A_sB_t$ whose slope $m_{st}$ is the minimum between the slopes of all the straight lines $A_iB_j$ such that 
  $1\le i<j\le n$  and the straight line $B_uA_v$ whose slope $m'_{uv}$ is the maximum between the slopes of all the straight lines
  $B_iA_j$ such that $1\le i<j\le n$.
\end{itemize}
\begin{center}
  \vvnn1(10,5,35)
\vvnn2(25,20,50)
\vvnn3(35,15,40)
\vvnn4(55,30,45)
\vvnn5(65,15,50)
\vvnn6(100,10,60)
\begin{pspicture}[framesep=0pt,c](-5,-6)(110,63)
  \multido{\n=5+5}{12}{\psline[linecolor=lightgray](-5,\n)(110,\n)}
  \multido{\n=5+5}{21}{\psline[linecolor=lightgray](\n,-3)(\n,63)}
  \axx(-5,-3)(110,63)
  \multido{\n=1+1}6{\psline[linewidth=2pt,linecolor=blue](A_\n)(B_\n)}
  \multido{\n=1+1}{5}{\name*(A_\n)(B_\n){$L_\n$}\point*(A_\n)(B_\n)(A_\n){$B_\n(x_\n,b_\n)$}
    \point*(B_\n)(A_\n)(B_\n){$A_\n(x_\n,a_\n)$}}
  \name*(A_6)(B_6){$L_n$}
  \point*(A_6)(B_6)(A_6){$B_n(x_n,b_n)$}\point*(B_6)(A_6)(B_6){$A_n(x_n,a_n)$}
  \rput*(82.5,35){\Huge$\cdots$}
  \rput[l](-5,-5){\figura}
  \name(A_1)(B_1){$L_1$}
\end{pspicture}%
\end{center}
\begin{main}
  With the previous notations and considering the non-trivial case $n\ge3$, we have that the following statements are equivalent.
  \begin{enumerate}[\rm i)]
  \item At least one straight line intersects all line segments $L_1,L_2,\ldots,L_n$.
  \item $\Phi(A_i,B_j,A_k)\le0\le\Phi(B_i,A_j,B_k)$ for all $i,j,k$ such that $1\!\le \!i<\!j\!<\!k\!\le \!n$.
  \item $\Phi(A_s,B_t,A_j)\le0\le\Phi(B_i,A_s,B_t)$ for all $i,j$ such that $1\le i<s$ and $t<j\le n$.
  \item $\Phi(A_i,B_u,A_v)\le0\le\Phi(B_u,A_v,B_j)$ for all $i,j$ such that $1\le i<u$ and $v<j\le n$.
  \item The straight line $A_sB_t$ intersects all line segments $L_1,L_2,\dots,L_n$.
  \item The straight line $B_uA_v$ intersects all line segments $L_1,L_2,\dots,L_n$.
  \end{enumerate}
\end{main}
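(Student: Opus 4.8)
The plan is to translate the geometric statement (i) into the inequality form (ii) by a single determinant manipulation, to observe that (iii) and (iv) are mere restrictions of (ii), and then to argue \emph{constructively} that the extremal-slope lines are forced to be transversals, so that the two chains (i)$\Rightarrow$(ii)$\Rightarrow$(iii)$\Rightarrow$(v)$\Rightarrow$(i) and (ii)$\Rightarrow$(iv)$\Rightarrow$(vi)$\Rightarrow$(i) close up. The dictionary I rely on throughout is that a line $y=mx+q$ meets $L_i$ exactly when $a_i\le mx_i+q\le b_i$, i.e.\ when $B_i$ lies on or above it and $A_i$ on or below it; by P3 the borderline ``on the line'' case is a vanishing $\Phi$, and by P5 each $\Phi$ is monotone in the height of whichever point is free, which is what lets me read a sign as ``above'' or ``below'' a fixed line.

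For (i)$\Rightarrow$(ii) let $g(x)=mx+q$ be a common transversal. Adding a multiple of the first two rows of the defining matrix to its third row leaves the determinant unchanged, so $\Phi$ is unaffected if I subtract $g$ from every $y$-coordinate; after this normalization every bottom point $A_i$ sits at height $\le 0$ and every top point $B_i$ at height $\ge 0$. Expanding $\Phi(A_i,B_j,A_k)$ along the last row gives $a_i(x_k-x_j)-b_j(x_k-x_i)+a_k(x_j-x_i)$ for $i<j<k$; the three differences of abscissae are positive, so each summand is $\le 0$ and $\Phi(A_i,B_j,A_k)\le 0$, while the same expansion of $\Phi(B_i,A_j,B_k)$ reverses every sign and gives $\ge 0$. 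Next, (ii)$\Rightarrow$(iii) and (ii)$\Rightarrow$(iv) cost nothing: since the extremal pairs satisfy $s<t$ and $u<v$, each inequality of (iii) or (iv) is literally an instance of (ii) for an admissible triple, e.g.\ $\Phi(A_s,B_t,A_j)\le 0$ is the triple $(s,t,j)$ with $s<t<j$ and $\Phi(B_i,A_s,B_t)\ge 0$ is the triple $(i,s,t)$ with $i<s<t$.

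The heart of the argument is (iii)$\Rightarrow$(v): I must check that the minimum-slope line $A_sB_t$, call it $g_{st}$ (so $g_{st}(x_s)=a_s$, $g_{st}(x_t)=b_t$, and $m_{st}$ is least among all slopes of $A_iB_j$ with $i<j$), satisfies the top condition ($B_j$ above) and bottom condition ($A_j$ below) at every abscissa. Two of the four families come for free from minimality: for $j>s$ the admissible line $A_sB_j$ has slope $\ge m_{st}$, which (common base point $A_s$, $x_j>x_s$) forces $B_j$ on or above $g_{st}$; for $j<t$ the admissible line $A_jB_t$ has slope $\ge m_{st}$, which (common point $B_t$, $x_j<x_t$) forces $A_j$ on or below $g_{st}$. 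The only requirements left uncovered are $B_j$-above for $j<s$ and $A_j$-below for $j>t$, and by the P3/P5 sign reading these are precisely $\Phi(B_i,A_s,B_t)\ge 0$ ($i<s$) and $\Phi(A_s,B_t,A_j)\le 0$ ($t<j$), i.e.\ exactly the inequalities supplied by (iii). Hence $A_sB_t$ is a transversal.

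The implication (iv)$\Rightarrow$(vi) is the mirror image: maximality of $m'_{uv}$ discharges $A_j$-below for $j>u$ and $B_j$-above for $j<v$, while $\Phi(A_i,B_u,A_v)\le 0$ and $\Phi(B_u,A_v,B_j)\ge 0$ supply the two leftover families, making $B_uA_v$ a transversal. Since (v)$\Rightarrow$(i) and (vi)$\Rightarrow$(i) are trivial, the two chains close all six statements into one equivalence. The single place that demands genuine care is the bookkeeping in the previous paragraph: partitioning the $2n$ top/bottom requirements into those killed by the extremal slope and those killed by the $\Phi$-inequalities, and matching the exact sign and argument order of each $\Phi$ to ``above/below the specific line $A_sB_t$.'' That sign-matching, underwritten by P5, is the main obstacle; everything else is a determinant identity or a triviality.
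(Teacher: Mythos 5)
Your proof is correct, and its architecture coincides with the paper's: the same two chains (i)$\Rightarrow$(ii)$\Rightarrow$(iii)$\Rightarrow$(v)$\Rightarrow$(i) and (ii)$\Rightarrow$(iv)$\Rightarrow$(vi)$\Rightarrow$(i), and the decisive step (iii)$\Rightarrow$(v) is argued exactly as in the paper: minimality of the slope $m_{st}$ handles the families ``$B_j$ above'' for $j>s$ and ``$A_j$ below'' for $j<t$ (the paper's cases 3 and 2), while the hypotheses $\Phi(B_i,A_s,B_t)\ge0$ and $\Phi(A_s,B_t,A_j)\le0$, read through P3 and P5 against the point of $A_sB_t$ at the same abscissa, handle the two leftover families (the paper's cases 1 and 4). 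The only place you genuinely diverge is (i)$\Rightarrow$(ii): the paper sandwiches $\Phi(A_i,B_j,A_k)$ and $\Phi(B_i,A_j,B_k)$ around $\Phi(P_i,P_j,P_k)=0$ by a chain of P5 monotonicity steps, swapping one transversal point at a time for an endpoint; you instead subtract the transversal $g$ from every ordinate --- legitimate, since this is the row operation $R_3\mapsto R_3-mR_2-qR_1$, which preserves the determinant --- and then expand along the last row, checking that each of the three terms $a_i(x_k-x_j)$, $-b_j(x_k-x_i)$, $a_k(x_j-x_i)$ has the required sign. Both are elementary and correct; your version makes the sign of $\Phi$ visible termwise in a single computation, while the paper's reuses P5 and needs no further determinant identity. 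One small bookkeeping remark: the minimality families cover only $j>s$ and $j<t$, so besides the two $\Phi$-families you should also record the trivial cases $b_s\ge a_s$ and $a_t\le b_t$ (as the paper does) to account for the top condition at $i=s$ and the bottom condition at $i=t$.
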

\begin{proof}[{\rm \bf Proof}]
  ($\bm{{\rm i}\Rightarrow{\rm ii}}$) We fix three integers $i,j,k$ such that $1\le i<j<k\le n$, then for assumption there exist three
  colinear points $P_i(x_i,y_i)\in L_i$, $P_j(x_j,y_j)\in L_j$ and $P_k(x_k,y_k)\in L_k$ such that $a_p\le y_p\le b_p$ for all
  $p\in\{i,j,k\}$. Then for P3 and P5 we have that\vskip-5mm
  \begin{gather*}
    \Phi(A_i,B_j,A_k)\le\Phi(P_i,B_j,A_k)\le\Phi(P_i,P_j,A_k)\le\overbrace{\Phi(P_i,P_j,P_k)}^{=0}\le\\
    \le\Phi(B_i,P_j,P_k)\le\Phi(B_i,A_j,P_k)\le\Phi(B_i,A_j,P_k)\,.
  \end{gather*}
  ($\bm{{\rm ii}\Rightarrow{\rm iii}}$), ($\bm{{\rm ii}\Rightarrow{\rm iv}}$). Obvious.\\[1mm]
  ($\bm{{\rm iii}\Rightarrow{\rm v}}$) For all $i\in\{1,2,\ldots,n\}$ let $P_i(x_i,y_i)$ be the intersection point between the straight
  lines $A_sB_t$ and $x=x_i$. We analyze four cases.\\[.5mm]
  $1^{\rm st}$ case: $1\le i<s$. For assumption $\Phi(B_i,A_s,B_t)\!\ge \!0$ and for P3 $\Phi(P_i,A_s,B_t)\!=0$, then for P5 we have that
  $b_i\ge y_i$.\\[.5mm]
  $2^{\rm nd}$ case: $1\le i<t$. We know that $m_{st}\le m_{it}$ and that the point $P_i$ belongs to the straight line $A_sB_t$, then we
  have that $$m_{st}\le m_{it}\quad\Leftrightarrow\quad\frac{y_i-b_t}{x_i-x_t}\le\frac{a_i-b_t}{x_i-x_t}
  \quad\Leftrightarrow\quad y_i-b_t\ge a_i-b_t\quad\Leftrightarrow\quad y_i\ge a_i\,.$$
  $3^{\rm rd}$ case: $s<i\le n$. We know that $m_{st}\le m_{si}$ and that the point $P_i$ belongs to the straight line $A_sB_t$, then we
  have that $$m_{st}\le m_{si}\quad\Leftrightarrow\quad\frac{a_s-y_i}{x_s-x_i}\le\frac{a_s-b_i}{x_s-x_i}
  \quad\Leftrightarrow\quad a_s-y_i\ge a_s-b_i\quad\Leftrightarrow\quad y_i\le b_i\,.$$
  $4^{\rm th}$ case: $t<i\le n$. For assumption $\Phi(A_s,B_t,A_i)\!\le\!0$ and for P3 $\Phi(A_s,B_t,P_i)\!=0$, then for P5 we have that
  $a_i\le y_i$.\\[.5mm]
  Adding the two trivial inequalities $b_s\ge y_s=a_s$ and $a_t\le y_t=b_t$, in the four cases are proved the inequalities
    $a_i\le y_i\le b_i$ for all $i\in\{1,2,\ldots n\}$, then the stright line $A_sB_t$ intersects all $n$ line segments.\\[1mm]
  ($\bm{{\rm iv}\Rightarrow{\rm vi}}$) Quite similar to the previous proof.\\[1mm]
  ($\bm{{\rm v}\Rightarrow{\rm i}}$), ($\bm{{\rm vi}\Rightarrow{\rm i}}$). Obvious.
\end{proof}
\begin{corol}
  The following statements are equivalent.
  \begin{enumerate}[\rm i)]
  \item There is one and only one straight line that intersects all the line segments $L_1,L_2,\dots,L_n$.
  \item The two straight lines $A_sB_t$ e $B_uA_v$ coincide and intersect all the line segments $L_1,L_2,\dots,L_n$.
  \item $\Phi(A_i,B_j,A_k)\le0\le\Phi(B_i,A_j,B_k)$ for all $i,j,k$ such that $1\le i<j<k\le n$ and, if we are not in presence of the limit
  case in which two of the segments degenerate into a point, then at least in one case there is the equality
  $$\Phi(A_i,B_j,A_k)=0\quad\text{or}\quad\Phi(B_i,A_j,B_k)=0\,.$$
  \end{enumerate}
\end{corol}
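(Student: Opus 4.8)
The plan is to establish the two equivalences $\mathrm{i})\Leftrightarrow\mathrm{ii})$ and $\mathrm{i})\Leftrightarrow\mathrm{iii})$, reusing the slope comparisons from the proof of (T1) together with P3 and P5. In every case the mere \emph{existence} of an intersecting line is supplied by (T1), because the inequalities appearing in iii) are exactly condition ii) of (T1). For $\mathrm{i})\Rightarrow\mathrm{ii})$ I would note that, by (T1), the lines $A_sB_t$ and $B_uA_v$ both intersect all segments as soon as one line does, so uniqueness forces them to coincide. For $\mathrm{ii})\Rightarrow\mathrm{i})$ the key remark is that every line meeting all the segments has slope in the interval $[m'_{uv},m_{st}]$: if such a line meets $L_i$ at $P_i$ and $L_j$ at $P_j$ with $i<j$, then, by the same slope comparisons used in the proof of (T1), the slope of $P_iP_j$ lies between the slope of $B_iA_j$ and that of $A_iB_j$, and taking the extremal pairs yields $m'_{uv}\le\text{slope}\le m_{st}$. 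When $A_sB_t=B_uA_v$ we have $m_{st}=m'_{uv}=:m^\ast$, so every intersecting line has this single slope; and a line of slope $m^\ast=m_{st}$ meeting $L_s$ at $P_s$ and $L_t$ at $P_t$ satisfies $y_{P_t}-y_{P_s}=m^\ast(x_t-x_s)=b_t-a_s$, which together with $y_{P_t}\le b_t$ and $y_{P_s}\ge a_s$ forces $P_s=A_s$ and $P_t=B_t$. Hence that line is $A_sB_t$, giving uniqueness.

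For $\mathrm{iii})\Rightarrow\mathrm{i})$ I would split into the two sub-cases of the hypothesis. If two segments degenerate to points $P,Q$, then any intersecting line must contain both (they have distinct $x$-coordinates), so it is the single line $PQ$. Otherwise iii) provides an index triple $i<j<k$ with, say, $\Phi(A_i,B_j,A_k)=0$. Taking an arbitrary intersecting line (it exists by (T1)) meeting $L_p$ at $P_p$, I would run the inequality chain from the proof of (T1),
$$\Phi(A_i,B_j,A_k)\le\Phi(P_i,B_j,A_k)\le\Phi(P_i,P_j,A_k)\le\Phi(P_i,P_j,P_k)=0,$$
and invoke the equality clause of P5 at each step: the assumption $\Phi(A_i,B_j,A_k)=0$ collapses the chain, forcing $P_i=A_i$, $P_j=B_j$, $P_k=A_k$. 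Thus every intersecting line passes through the three points $A_i,B_j,A_k$ (collinear by P3) and hence equals the single line $A_iA_k$; the case $\Phi(B_i,A_j,B_k)=0$ is symmetric.

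For $\mathrm{i})\Rightarrow\mathrm{iii})$ I would use the already proved $\mathrm{i})\Leftrightarrow\mathrm{ii})$: uniqueness gives $A_sB_t=B_uA_v=:\ell$, so the line $\ell$ contains the four endpoints $A_s,B_t,B_u,A_v$, with $s<t$ and $u<v$; and i) gives existence, hence the inequalities. It remains only to produce an equality unless two segments are points. Since $\ell$ is non-vertical, any two of these four endpoints with the same $x$-coordinate must coincide; in particular $s=u$ forces $L_s$ to be a point and $t=v$ forces $L_t$ to be a point. A short case analysis on the relative order of the two ``lower'' indices $s,v$ and the two ``upper'' indices $t,u$ then shows that, unless $s=u$ and $t=v$ hold simultaneously, either an upper index lies strictly between the two lower ones, yielding a collinear triple $A_\bullet,B_\bullet,A_\bullet$ and hence $\Phi=0$ by P3, or a lower index lies strictly between the two upper ones, yielding $\Phi(B_\bullet,A_\bullet,B_\bullet)=0$. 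The single excluded configuration $s=u,\ t=v$ is precisely the limit case in which $L_s$ and $L_t$ both degenerate to points, so in every remaining case an equality occurs.

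The step I expect to be the main obstacle is this last case analysis: matching the lower/upper pattern of $A_s,B_t,B_u,A_v$ with an admissible ordered triple $i<j<k$, and verifying that the \emph{only} pattern admitting no such triple is exactly the degenerate two-point configuration singled out in the statement. The slope argument for $\mathrm{ii})\Rightarrow\mathrm{i})$ and the chain argument for $\mathrm{iii})\Rightarrow\mathrm{i})$ are, by contrast, routine once (T1) and P5 are in hand.
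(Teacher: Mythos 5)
Your proposal is correct, and its core coincides with the paper's own proof: the implication i) $\Rightarrow$ ii) via T1 (uniqueness plus the fact that $A_sB_t$ and $B_uA_v$ both intersect all segments once any line does); the production of an equality from the four collinear points $A_s$, $B_t$, $B_u$, $A_v$, where your case analysis — with the single excluded pattern $s=u$, $t=v$ being exactly the two-degenerate-segments case — is precisely the paper's four-case argument (there presented as ii) $\Rightarrow$ iii)); and, for iii) $\Rightarrow$ i), the collapse of the chain
$$0=\Phi(A_i,B_j,A_k)\le\Phi(P_i,B_j,A_k)\le\Phi(P_i,P_j,A_k)\le\Phi(P_i,P_j,P_k)=0$$
through the equality clause of P5, forcing $P_i\equiv A_i$, $P_j\equiv B_j$, $P_k\equiv A_k$. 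The only genuine difference is organizational: the paper proves the cycle i) $\Rightarrow$ ii) $\Rightarrow$ iii) $\Rightarrow$ i) and therefore never needs a direct proof of ii) $\Rightarrow$ i), whereas your decomposition into the two equivalences i) $\Leftrightarrow$ ii) and i) $\Leftrightarrow$ iii) obliges you to supply one. Your slope argument for that extra implication — every transversal has slope in $[m'_{uv},m_{st}]$, and a transversal of slope $m_{st}$ must meet $L_s$ at $A_s$ and $L_t$ at $B_t$ — is sound, but it duplicates work the paper postpones to Corollary 2, where exactly this extremality of the slopes $m_{st}$ and $m'_{uv}$ is established. So your proof is essentially the paper's proof plus one redundant-but-correct implication; rearranging it into the cycle would let you drop the slope argument entirely, while keeping it gives a self-contained route that anticipates Corollary 2.
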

\begin{proof}[{\rm \bf Proof}]
  ($\bm{{\rm i}\Rightarrow{\rm ii}}$) Obvious for the equivalence ${\rm i}\Leftrightarrow{\rm iii}\Leftrightarrow{\rm iv}$ in T1.\\[1mm]
  ($\bm{{\rm ii}\Rightarrow{\rm iii}}$) for T1 we are sure that $\Phi(A_i,B_j,A_k)\le0\le\Phi(B_i,A_j,B_k)$ for all $i,j,k$ such that
  $1\le i<j<k\le n$. The four points $A_s$, $B_t$, $B_u$ e $A_v$ are, for assumption, colinear; assuming that there are not two segments
  that degenerate to a point, then it can not happen simultaneously that $A_s\equiv B_u$ and $B_t\equiv A_v$, then we are sure that in
  addition with the already known inequalities $ x_s <x_t$ e $x_u<x_v$, also at least one of these occurs:
  $$x_s<x_u\,,\quad x_u<x_s\,,\quad x_t<x_v\,,\quad x_v<x_t\,.$$ So, in each of the four cases we have that
  \begin{align*}
   \qquad\qquad x_s&<x_u\!\!\!\!\!&\text{and}&&\!\!\!\!\!x_u&<x_v&\Rightarrow&&\Phi(A_s,B_u,A_v)&=0\,,\qquad\qquad\\
   \qquad\qquad x_u&<x_s\!\!\!\!\!&\text{and}&&\!\!\!\!\!x_s&<x_t&\Rightarrow&&\Phi(B_u,A_s,B_t)&=0\,,\qquad\qquad\\
   \qquad\qquad x_t&<x_v\!\!\!\!\!&\text{and}&&\!\!\!\!\!x_s&<x_t&\Rightarrow&&\Phi(A_s,B_t,A_v)&=0\,,\qquad\qquad\\
   \qquad\qquad x_v&<x_t\!\!\!\!\!&\text{and}&&\!\!\!\!\!x_u&<x_v&\Rightarrow&&\Phi(B_u,A_v,B_t)&=0\,.\qquad\qquad
  \end{align*}
  ($\bm{{\rm iii}\Rightarrow{\rm i}}$) For T1 at least one straight line exists, the uniqueness would be obvious if two of the
  line segments were pointlike, so we assume that $\Phi(A_i,B_j,A_k)=0$ for some $i,j,k$ such that $1\le i<j<k\le n$ (the proof is similar
  if we assume that $\Phi(B_i,A_j,B_k)=0$). A straight line that crosses the segments $L_i$, $L_j$ and $L_k$ will pass through the three
  colinear points $P_i\in L_i$, $P_j\in L_j$ and $P_k\in L_k$, then for P5 we have that 
  $$0=\Phi(A_i,B_j,A_k)\le\Phi(P_i,B_j,A_k)\le\Phi(P_i,P_j,A_k)\le\Phi(P_i,P_j,P_k)=0\,.$$
  Since all inequalities are equalities then, once again for P5, we have that $P_i\equiv A_i$, $P_j\equiv B_j$ and $P_k\equiv A_k$  and the
  straight line can only be unique.
\end{proof}
\begin{corol}
  Among all the straight lines that pass through the line segments $L_1,L_2,\dots,L_n$ (assuming that there exist any), the straight lines
  $A_sB_t$  and $B_uA_v$ are those with maximum and minimum slope respectively.
\end{corol}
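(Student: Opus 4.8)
The plan is to parametrize a generic solving line by its explicit equation $y=mx+c$, so that its slope is exactly the parameter $m$, and then to show that the set of feasible slopes is precisely the closed interval $[m'_{uv},m_{st}]$, with the two endpoints realized by $B_uA_v$ and $A_sB_t$ respectively. Since we assume that at least one solving line exists, Theorem (T1) (the equivalence of i, v and vi) already guarantees that both $A_sB_t$ and $B_uA_v$ intersect every segment, so they are legitimate competitors; it only remains to bound the slope of every other solution.

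First I would translate the condition ``the line $y=mx+c$ meets every segment'' into the system $a_i\le mx_i+c\le b_i$ for all $i\in\{1,\dots,n\}$, that is $a_i-mx_i\le c\le b_i-mx_i$. For a fixed slope $m$, such a constant $c$ exists if and only if every lower bound does not exceed every upper bound, i.e. $a_i-mx_i\le b_j-mx_j$ for all pairs $i,j$.

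The heart of the argument is to read these pairwise inequalities as two-sided bounds on $m$. For $i<j$ one divides by $x_j-x_i>0$ and obtains $m\le\frac{b_j-a_i}{x_j-x_i}$, the slope of the line $A_iB_j$; taking the minimum over all such pairs gives $m\le m_{st}$. For $i>j$ one divides by the negative quantity $x_j-x_i$, the inequality reverses, and one gets $m\ge\frac{a_i-b_j}{x_i-x_j}$, the slope of the line $B_jA_i$; taking the maximum over all such pairs gives $m\ge m'_{uv}$. The diagonal terms $i=j$ reduce to $a_i\le b_i$, which hold by hypothesis. Hence every solving line has slope in $[m'_{uv},m_{st}]$.

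Finally, since $A_sB_t$ is a solving line of slope exactly $m_{st}$ and $B_uA_v$ is a solving line of slope exactly $m'_{uv}$, these two attain respectively the maximum and the minimum of the feasible slope range, which is the assertion. The only delicate point, really a matter of bookkeeping rather than a genuine obstacle, is to match the binding pairwise inequalities with the slopes that define $m_{st}$ and $m'_{uv}$, i.e. to verify that the extreme slopes are exactly the minimum of the $A_iB_j$-slopes and the maximum of the $B_iA_j$-slopes and not some other combination; the sign of $x_j-x_i$ controls this and must be tracked carefully. Alternatively, one could recast the same computation through the functional $\Phi$ together with Proposition (P5), but the direct inequality on slopes seems the most transparent route.
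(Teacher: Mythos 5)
Your proof is correct, and it actually establishes something slightly stronger than the statement, but by a genuinely different route from the paper's. The paper argues by contraposition on the single pair $(L_s,L_t)$: if a line has slope $m>m_{st}$ and meets $L_s$ at $P_s(x_s,y_s)$, then at $x=x_t$ its ordinate $y_t$ satisfies $y_t-b_t>y_s-a_s\ge0$ (the inequality $\frac{a_s-b_t}{x_s-x_t}<\frac{y_s-y_t}{x_s-x_t}$ reverses on multiplication by $x_s-x_t<0$), so the line passes strictly above $B_t$ and misses $L_t$; the minimum-slope half is symmetric. You instead eliminate the intercept $c$ in Fourier--Motzkin fashion: for fixed $m$ the intervals $[a_i-mx_i,\,b_i-mx_i]$ have a common point iff $a_i-mx_i\le b_j-mx_j$ for all pairs $i,j$, and dividing by $x_j-x_i$ with the sign tracked turns these conditions into exactly $m'_{uv}\le m\le m_{st}$; T1 (i $\Leftrightarrow$ v $\Leftrightarrow$ vi) then gives attainment of both endpoints. (Your parametrization $y=mx+c$ is harmless: since the abscissas $x_1<\dots<x_n$ are distinct, no solving line is vertical.) What your route buys: the full description of the feasible slopes as the closed interval $[m'_{uv},m_{st}]$, both extremes handled at once, and an independent re-derivation of pairwise feasibility conditions close in spirit to condition ii of T1. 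What the paper's buys: brevity --- it needs only the one pair $(s,t)$ and a two-line computation. Note that both arguments ultimately lean on T1 for the fact that $A_sB_t$ and $B_uA_v$ are themselves solving lines; you make that dependence explicit, while the paper leaves it implicit in the corollary's parenthetical hypothesis.
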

\begin{proof}[{\rm \bf Proof}]
  We consider a line $r$ with slope $m>m_{st}$, that passes through the points $P_s(x_s,y_s)\in L_s$ and $P_t(x_t,y_t)$ and we prove that 
  $P_t\notin L_t$. We calculate:
  \begin{gather*}
    m_{st}<m\quad\Rightarrow\quad \frac{a_s-b_t}{x_s-x_t}<\frac{y_s-y_t}{x_s-x_t}\quad\Rightarrow\\
    \Rightarrow\quad a_s-b_t>y_s-y_t\quad\Rightarrow\quad y_t-b_t>y_s-a_s\ge0\;.
  \end{gather*}
  Since $y_t>b_t$ then the straight line $r$ does not cross the line segment $L_t$ and this proves that $m_ {st}$ is the maximum slope
  between those of the straight lines that cross all segments. Similarly $m'_{uv}$ is the minimum of the slopes.
\end{proof}
\subsec{A special straight line}
Now we are able to suggest some algorithms that allow us to find a straight line, among others, that can cross in the ``best way'' all
line segments.\\[2mm]
{\bf Algorithm 1.} A (not so) special straight line.
\begin{enumerate}
 \item Let the $n$ line segments be given, then we find the straight line $r=A_sB_t$.
 \item We check if $\Phi(B_i,A_s,B_t)\ge0$ for all $i\in\{1,2,\ldots,s-1\}$.
 \item We check if $\Phi(A_s,B_t,A_j)\le0$ for all $j\in\{t+1,t+2,\ldots,n\}$.
 \item If the checks at points 2 and 3 have failed then there is no reason to continue, otherwise we can find also the straight line
  $p=B_uA_v$. For T1 we are sure that the straight lines $r$ and $p$ pass through all the line segments $L_1,L_2,\ldots,L_n$.
 \item We choose the line $s_1$ that, in \LL, corresponds to the midpoint between $r$ and $p$. For L2 the set of all the straight lines
  passing through all of the line segments $L_1,L_2,\ldots,L_n$ is convex, then $s_1$ also passes for all of those line segments.
\end{enumerate}
\begin{center}
  \small%
\vvnn1(5,5,35)%
\vvnn2(15,20,50)%
\vvnn3(20,15,40)%
\vvnn4(35,30,45)%
\vvnn5(45,15,50)%
\vvnn6(50,10,60)%
\begin{pspicture}[c,framesep=0pt](-4,-6)(56,66)
  \pspolygon[linestyle=none,fillstyle=solid,fillcolor=Bluartico](A_1)(A_2)(A_3)(A_4)(A_5)(A_6)(B_6)(B_5)(B_4)(B_3)(B_2)(B_1)
  \multido{\n=5+5}{12}{\psline[linecolor=lightgray](-2,\n)(55,\n)}
  \multido{\n=5+5}{10}{\psline[linecolor=lightgray](\n,-2)(\n,65)}
  \axx(-2,-2)(55,65)
  \multido{\n=1+1}6{\psline[linewidth=1pt,linecolor=blue](A_\n)(B_\n)}
  \intersection(A_2)(B_5)(55,0)(55,1)X
  \intersection(A_2)(B_5)(-2,0)(-2,1)W
  \psline[linestyle=dashed,linewidth=0.6pt](X)(W)
  \pcline[linestyle=none,offset=3pt](X+0.08*{X>W})(X)\lput*{:U}{$\bm r$}
  \intersection(A_4)(B_1)(55,0)(55,1)Y
  \intersection(A_4)(B_1)(-2,0)(-2,1)Z
  \psline[linestyle=dashed,linewidth=0.6pt](Y)(Z)
  \pcline[linestyle=none,offset=4pt](Y)(Y+0.08*{Y>Z})\lput*{:D}{$\bm p$}
  \psline[linecolor=red](0.5*{X+Y})(0.5*{W+Z})
  \pcline[linestyle=none,offset=3pt](0.5*{X+Y}+0.08*{{0.5*{X+Y}}>{0.5*{W+Z}}})(0.5*{X+Y})\lput*{:U}{\color{red}$\bm{s_1}$}
  \vpoint*(B_1)(A_1)(A_2)(A_3)(A_4)(A_5)(A_6)(B_6)(B_5)(B_4)(B_3)(B_2)(B_1)(Z)
  \multido{\n=1+1}{10}{\uput*{2pt}[d](5*\n,0){\n}}
  \multido{\n=1+1}{12}{\uput*{2pt}[l](5*0,\n){\n}}
  \uput[r](-4,-5){\figura}
\end{pspicture}
\vvnn4(35,20,45)%
\begin{pspicture}[c,framesep=0pt](-3,-6)(56,66)
  \pspolygon[linestyle=none,fillstyle=solid,fillcolor=Bluartico](A_1)(A_2)(A_3)(A_4)(A_5)(A_6)(B_6)(B_5)(B_4)(B_3)(B_2)(B_1)
  \multido{\n=5+5}{12}{\psline[linecolor=lightgray](-2,\n)(55,\n)}
  \multido{\n=5+5}{10}{\psline[linecolor=lightgray](\n,-2)(\n,65)}
  \axx(-2,-2)(55,65)
  \multido{\n=1+1}6{\psline[linewidth=1pt,linecolor=blue](A_\n)(B_\n)}
  \intersection(A_3)(B_5)(55,0)(55,1)X
  \intersection(A_3)(B_5)(-2,0)(-2,1)W
  \psline[linestyle=dashed,linewidth=0.6pt](X)(W)
  \pcline[linestyle=none,offset=3pt](X+0.08*{X>W})(X)\lput*{:U}{$\bm r$}
  \intersection(A_3)(B_1)(55,0)(55,1)Y
  \intersection(A_3)(B_1)(-2,0)(-2,1)Z
  \psline[linestyle=dashed,linewidth=0.6pt](Y)(Z)
  \pcline[linestyle=none,offset=4pt](Y)(Y+0.08*{Y>Z})\lput*{:D}{$\bm p$}
  \psline[linecolor=red](0.5*{X+Y})(0.5*{W+Z})
  \pcline[linestyle=none,offset=3pt](0.5*{X+Y}+0.08*{{0.5*{X+Y}}>{0.5*{W+Z}}})(0.5*{X+Y})\lput*{:U}{\color{red}$\bm{s_1}$}
  \vpoint*(B_1)(A_1)(A_2)(A_3)(A_4)(A_5)(A_6)(B_6)(B_5)(B_4)(B_3)(B_2)(B_1)(Z)
  \multido{\n=1+1}{10}{\uput*{2pt}[d](5*\n,0){\n}}
  \multido{\n=1+1}{12}{\uput*{2pt}[l](5*0,\n){\n}}
  \uput[r](-4,-5){\figura}
\end{pspicture}%
\end{center}
In Figure 5 we have that
$$r\!:\,y=x+1\quad\text{and}\quad p\!:\,y=-\frac16x+\frac{43}6\quad\Rightarrow\quad s_1\!:\,y=\frac5{12}x+\frac{49}{12}\,;$$
in Figure 6 we have that
$$r\!:\,y=\frac 45x+\frac{14}5\quad\text{and}\quad p\!:\,y=-\frac13+\frac{22}3\quad\Rightarrow\quad
s_1\!:\,y=\frac{7}{30}x+\frac{76}{15}\,.$$
But this algorithm does not always give us a ``good'' straight line. As it is shown in Figure 6, sometimes the straight line we get does not
seem to be one of the best and in this example the straight line passes through the end-point of one of the line segments.\\[2mm]
{\bf Algorithm 2.} A straight line a little bit more special.
\begin{enumerate}
 \item We perform steps 1, 2, 3 and 4 of Algorithm 1.
 \item Among all the straight lines $A_iA_j$, with $1\le i<j\le n$, we take only those that cross all line segments
  $L_1,L_2,\ldots,L_n$, that are distinct from $r$ and $p$ and we take only once if two or more of them are identical.
 \item Among all the straight lines $B_iB_j$, with $1\le i<j\le n$, we take only those that cross all line segments
  $L_1,L_2,\ldots,L_n$, that are distinct from $r$ and $p$ and we take only once if two or more of them are identical.
 \item For L2 all the straight lines found in steps 2 and 3, together with $r$ and $p$,  are the vertices of a convex polygon in \LL. Let us
  take the straight line $s_2$ that corresponds to the discrete barycenter of this polygon.
\end{enumerate}
Let us review the example in Figure 5.
\begin{center}
  \small%
\vvnn1(5,5,35)%
\vvnn2(15,20,50)%
\vvnn3(20,15,40)%
\vvnn4(35,30,45)%
\vvnn5(45,15,50)%
\vvnn6(50,10,60)%
\begin{pspicture}[c,framesep=0pt](-4,-6)(56,66)
  \pspolygon[linestyle=none,fillstyle=solid,fillcolor=Bluartico](A_1)(A_2)(A_3)(A_4)(A_5)(A_6)(B_6)(B_5)(B_4)(B_3)(B_2)(B_1)
  \multido{\n=5+5}{12}{\psline[linecolor=lightgray](-2,\n)(55,\n)}
  \multido{\n=5+5}{10}{\psline[linecolor=lightgray](\n,-2)(\n,65)}
  \axx(-2,-2)(55,65)
  \multido{\n=1+1}6{\psline[linewidth=1pt,linecolor=blue](A_\n)(B_\n)}
  \intersection(A_2)(B_5)(55,0)(55,1)X
  \intersection(A_2)(B_5)(-2,0)(-2,1)W
  \psline[linestyle=dashed,linewidth=0.6pt](X)(W)
  \pcline[linestyle=none,offset=3pt](X+0.08*{X>W})(X)\lput*{:U}{$\bm r$}
  \intersection(A_4)(B_1)(55,0)(55,1)Y
  \intersection(A_4)(B_1)(-2,0)(-2,1)Q
  \psline[linestyle=dashed,linewidth=0.6pt](Y)(Q)
  \pcline[linestyle=none,offset=4pt](Y)(Y+0.08*{Y>Q})\lput*{:D}{$\bm p$}
  \intersection(A_2)(A_4)(55,0)(55,1)Y
  \intersection(A_2)(A_4)(-2,0)(-2,1)Z
  \psline[linestyle=dashed,linewidth=0.6pt](Y)(Z)
  \intersection(B_4)(B_1)(55,0)(55,1)Y
  \intersection(B_4)(B_1)(-2,0)(-2,1)Z
  \psline[linestyle=dashed,linewidth=0.6pt](Y)(Z)
  \intersection(B_4)(B_5)(55,0)(55,1)Y
  \intersection(B_4)(B_5)(-2,0)(-2,1)Z
  \psline[linestyle=dashed,linewidth=0.6pt](Y)(Z)
  \intersection(5*1,5)(5*137 30 div*0,1)(55,0)(55,1)a
  \intersection(5*1,5)(5*137 30 div*0,1)(-2,0)(-2,1)b
  \psline[linecolor=red](a)(b)
  \pcline[linestyle=none,offset=3pt](a+0.08*{a>b})(a)\lput*{:U}{\color{red}$\bm{s_2}$}
  \vpoint*(B_1)(A_1)(A_2)(A_3)(A_4)(A_5)(A_6)(B_6)(B_5)(B_4)(B_3)(B_2)(B_1)(Q)
  \multido{\n=1+1}{10}{\uput*{2pt}[d](5*\n,0){\n}}
  \multido{\n=1+1}{12}{\uput*{2pt}[l](5*0,\n){\n}}
  \uput[r](-4,-5){\figura}
\end{pspicture}\hfill\begin{pspicture}[c,framesep=0pt](-10,-8)(47,64)
  \vnode(21,15){P_1}
  \vnode(-7,43){P_2}
  \vnode(14,40){P_3}
  \vnode(21,33){P_4}
  \vnode(42,06){P_5}
  \pspolygon[fillstyle=solid, fillcolor=Bluartico](P_1)(P_2)(P_3)(P_4)(P_5)
  {\psset{linecolor=lightgray}
    \psline(-7,-3)(-7,63)
    \psline(14,-3)(14,63)
    \psline(21,-3)(21,63)
    \psline(42,-3)(42,63)
    \psline(-10,15)(47,15)
    \psline(-10,6)(47,6)
    \psline(-10,33)(47,33)
    \psline(-10,40)(47,40)
    \psline(-10,43)(47,43)}
  \axmb(-10,-3)(47,63)
  \vpoint*2pt(P_4)(P_3)(P_2)(P_1)
  \vpoint2pt(P_2)(P_1)(P_5)(P_4)(P_3)
  \point[linecolor=red,fillcolor=red](P_5)(1 5 div*{P_1+P_2+P_3+P_4+P_5})(P_5){\color{red}G}
  \uput*[d](-7,0){$-\frac16$}
  \uput*[d](14,0){$\frac13$}
  \uput*[d](21,0){$\frac12$}
  \uput*[d](42,0){$1$}
  \uput*[l](0,6){$1$}
  \uput*[l](0,15){$\frac52$}
  \uput*[l](0,33){$\frac{11}2$}
  \uput[ur](8,48){$\frac{43}6$}
  \uput[dl](-6,39){$\frac{20}3$}
  \psline[linewidth=0.3pt,linestyle=dotted,dotsep=1pt]{->}(9.3,50.1)(0.3,43.3)
  \psline[linewidth=0.3pt,linestyle=dotted,dotsep=1pt]{->}(-7.3,37.05)(-0.3,39.7)
  \uput{0pt}[r](-10,-7){\figura}
\end{pspicture}%
\end{center}
In Figure 7 we have represented the line segments and in Figure 8 we have represented, in \LL, the convex polygon formed by all the
straight lines that cross all line segments; the point $G$ is the discrete barycenter of the polygon and it is associated with the
straight line $s_2$ that would seem to go through the line segments better than the straight line $s_1$ in Figure 5. In summary we have:
\renewcommand\minalignsep{2pt}
\begin{align*}
  A_4B_1\!:\;y=&-\frac16x+\frac{43}6&&\Rightarrow&&\!P_2\left(-\frac16,\frac{43}6\right),&
  A_2A_4\!:\;y=&\frac12x+\frac52\!&&\Rightarrow&&\!P_1\left(\frac12,\frac52\right),\\
  B_1B_4\!:\;y=&\frac13x+\frac{20}3\!&&\Rightarrow&&\!P_3\left(\frac13,\frac{20}3\right),&
  B_5A_2\!:\;y=&x+1\!&&\Rightarrow&&P_5\left(1,1\right),\\
  B_4B_5\!:\;y=&\frac12x+\frac{11}2&&\Rightarrow&&\!P_4\left(\frac12,\frac{11}2\right);
\end{align*}
$$m_G=\frac{-\frac16+\frac13+\frac12+\frac12+1}{5}=\frac{13}{30}\;,\quad
b_G=\frac{\frac{43}6+\frac{20}3+\frac{11}2+\frac52+1}{5}=\frac{137}{30}$$
and finally the equation of $s_2$ is $y=\dfrac{13}{30}x+\dfrac{137}{30}$.\vskip2mm\noindent
Let us review also the example in Figure 6.
\begin{center}
  \small%
\vvnn1(5,5,35)%
\vvnn2(15,20,50)%
\vvnn3(20,30,40)%
\vvnn4(35,20,45)%
\vvnn5(45,15,50)%
\vvnn6(50,10,60)%
\begin{pspicture}[c,framesep=0pt](-4,-6)(56,66)
  \pspolygon[linestyle=none,fillstyle=solid,fillcolor=Bluartico](A_1)(A_2)(A_3)(A_4)(A_5)(A_6)(B_6)(B_5)(B_4)(B_3)(B_2)(B_1)
  \multido{\n=5+5}{12}{\psline[linecolor=lightgray](-2,\n)(55,\n)}
  \multido{\n=5+5}{10}{\psline[linecolor=lightgray](\n,-2)(\n,65)}
  \axx(-2,-2)(55,65)
  \multido{\n=1+1}6{\psline[linewidth=1pt,linecolor=blue](A_\n)(B_\n)}
  \intersection(A_3)(B_5)(55,0)(55,1)X
  \intersection(A_3)(B_5)(-2,0)(-2,1)W
  \psline[linestyle=dashed,linewidth=0.6pt](X)(W)
  \pcline[linestyle=none,offset=3pt](X+0.08*{X>W})(X)\lput*{:U}{$\bm r$}
  \intersection(A_3)(B_1)(55,0)(55,1)Y
  \intersection(A_3)(B_1)(-2,0)(-2,1)Q
  \psline[linestyle=dashed,linewidth=0.6pt](Y)(Q)
  \pcline[linestyle=none,offset=4pt](Y)(Y+0.08*{Y>Q})\lput*{:D}{$\bm p$}
  \intersection(B_4)(B_1)(55,0)(55,1)Y
  \intersection(B_4)(B_1)(-2,0)(-2,1)Z
  \psline[linestyle=dashed,linewidth=0.6pt](Y)(Z)
  \intersection(B_4)(B_5)(55,0)(55,1)Y
  \intersection(B_4)(B_5)(-2,0)(-2,1)Z
  \psline[linestyle=dashed,linewidth=0.6pt](Y)(Z)
  \intersection(5*-11,2)(5*223 40 div*0,1)(55,0)(55,1)a
  \intersection(5*-11,2)(5*223 40 div*0,1)(-2,0)(-2,1)b
  \psline[linecolor=red](a)(b)
  \pcline[linestyle=none,offset=3pt](a+0.08*{a>b})(a)\lput*{:U}{\color{red}$\bm{s_2}$}
  \vpoint*(B_1)(A_1)(A_2)(A_3)(A_4)(A_5)(A_6)(B_6)(B_5)(B_4)(B_3)(B_2)(B_1)(Q)
  \multido{\n=1+1}{10}{\uput*{2pt}[d](5*\n,0){\n}}
  \multido{\n=1+1}{12}{\uput*{2pt}[l](5*0,\n){\n}}
  \uput[r](-4,-5){\figura}
\end{pspicture}
  \vnode(!4 42 mul 5 div 14 6 mul 5 div){P_1}
  \vnode(-14,44){P_2}
  \vnode(14,40){P_3}
  \vnode(21,33){P_4}
  \pspolygon[fillstyle=solid, fillcolor=Bluartico,linearc=0.05](P_1)(P_2)(P_3)(P_4)
  {\psset{linecolor=lightgray}
    \psline(-14,-3)(-14,63)
    \psline(14,-3)(14,63)
    \psline(21,-3)(21,63)
    \psline(P_1|0,-3)(P_1|0,63)
    \psline(-19,0|P_1)(38,0|P_1)
    \psline(-19,33)(38,33)
    \psline(-19,40)(38,40)
    \psline(-19,44)(38,44)}
  \axmb(-19,-3)(38,63)
  \vpoint*2pt(P_2)(P_1)(P_4)(P_3)(P_2)(P_1)
  \point[linecolor=red,fillcolor=red](P_1)(1 4 div*{P_1+P_2+P_3+P_4})(P_1){\color{red}G}
  \uput*[d](-14,0){$-\frac13$}
  \uput*[d](14,0){$\frac13$}
  \uput*[d](21,0){$\frac12$}
  \uput*[d](P_1|0,0){$\frac45$}
  \uput*[l](0,0|P_1){$\frac{14}5$}
  \uput*[l](0,33){$\frac{11}2$}
  \uput[ur](8,48){$\frac{22}3$}
  \uput[dl](-6,39){$\frac{20}3$}
  \psline[linewidth=0.3pt,linestyle=dotted,dotsep=1pt]{->}(9.3,50.1)(0.3,44.3)
  \psline[linewidth=0.3pt,linestyle=dotted,dotsep=1pt]{->}(-7.3,37.05)(-0.3,39.7)
  \uput{0pt}[r](-19,-7){\figura}
\end{pspicture}%
\end{center}
Even in this example the straight line $s_2$ crosses all the line segments in a very ``better'' way than the straight line $s_1$ do in
Figure 6. In summary in this example we have:
\begin{align*} 
  B_5A_3\!:\;y=&\frac45x+\frac{14}5\!&&\Rightarrow&&P_1\left(\frac45,\frac{14}5\right),&
  A_3B_1\!:\;y=&-\frac13x+\frac{22}3&&\Rightarrow&&\!P_2\left(-\frac13,\frac{22}3\right),\\[3mm]
  B_1B_4\!:\;y=&\frac13x+\frac{20}3\!&&\Rightarrow&&\!P_3\left(\frac13,\frac{20}3\right),&
  B_4B_5\!:\;y=&\frac12x+\frac{11}2&&\Rightarrow&&\!P_4\left(\frac12,\frac{11}2\right);
\end{align*}
$$m_G=\frac{\frac45-\frac13+\frac13+\frac12}4=\frac{13}{40}\;,\quad
b_G=\frac{\frac{14}5+\frac{22}3+\frac{20}3+\frac{11}2}4=\frac{223}{40}$$
and finally $s_2\!:\,y=\dfrac{13}{40}x+\dfrac{223}{40}$.\\[2mm]
{\bf Algorithm 3.} An even more special straight line.
\begin{enumerate}
  \item We perform steps 1, 2 and 3 of Algorithm 2.
  \item We do the same observations done in the step 4 of the algorithm 2, but in this case, as straight line $s_3$, we take the straight
  line associated with the continuous  barycenter of the polygon that we find in \LL.
\end{enumerate}
Let us review again the two examples above.
\begin{center}
  \small%
\vvnn1(5,5,35)%
\vvnn2(15,20,50)%
\vvnn3(20,15,40)%
\vvnn4(35,30,45)%
\vvnn5(45,15,50)%
\vvnn6(50,10,60)%
\begin{pspicture}[c,framesep=0pt](-4,-6)(56,66)
  \pspolygon[linestyle=none,fillstyle=solid,fillcolor=Bluartico](A_1)(A_2)(A_3)(A_4)(A_5)(A_6)(B_6)(B_5)(B_4)(B_3)(B_2)(B_1)
  \multido{\n=5+5}{12}{\psline[linecolor=lightgray](-2,\n)(55,\n)}
  \multido{\n=5+5}{10}{\psline[linecolor=lightgray](\n,-2)(\n,65)}
  \axx(-2,-2)(55,65)
  \multido{\n=1+1}6{\psline[linewidth=1pt,linecolor=blue](A_\n)(B_\n)}
  \intersection(5*1,5)(5*137 30 div*0,1)(55,0)(55,1)a
  \intersection(5*1,5)(5*137 30 div*0,1)(-2,0)(-2,1)b
  \psline[linestyle=dashed,linecolor=green](a)(b)
  \pcline[linestyle=none,offset=4pt](a+0.08*{a>b})(a)\lput*{:U}{\color{green}$\bm{s_2}$}
  \intersection(A_2)(B_5)(55,0)(55,1)X
  \intersection(A_2)(B_5)(-2,0)(-2,1)W
  \intersection(A_4)(B_1)(55,0)(55,1)Y
  \intersection(A_4)(B_1)(-2,0)(-2,1)Z
  \psline[linestyle=dashed,linecolor=cyan](0.5*{X+Y})(0.5*{W+Z})
  \pcline[linestyle=none,offset=4pt](0.5*{X+Y})(0.5*{X+Y}+0.08*{{0.5*{X+Y}}>{0.5*{W+Z}}})\lput*{:D}{\color{cyan}$\bm{s_1}$}
  \intersection(!0 107 24 div 5 mul)(!5 117 24 div 5 mul)(55,0)(55,1)X
  \intersection(!0 107 24 div 5 mul)(!5 117 24 div 5 mul)(-2,0)(-2,1)W
  \psline[linecolor=red](X)(W)
  \pcline[linestyle=none,offset=1pt](X)(X+0.08*{X>W})\lput*{:D}{\color{red}$\bm{s_3}$}
  \vpoint*(B_1)(A_1)(A_2)(A_3)(A_4)(A_5)(A_6)(B_6)(B_5)(B_4)(B_3)(B_2)(B_1)(A_1)
  \multido{\n=1+1}{10}{\uput*{2pt}[d](5*\n,0){\n}}
  \multido{\n=1+1}{12}{\uput*{2pt}[l](5*0,\n){\n}}
  \uput[r](-4,-5){\figura}
\end{pspicture}
\vvnn2(15,20,50)%
\vvnn3(20,30,40)%
\vvnn4(35,20,45)%
\vvnn5(45,15,50)%
\vvnn6(50,10,60)%
\begin{pspicture}[c,framesep=0pt](-4,-6)(56,66)
  \pspolygon[linestyle=none,fillstyle=solid,fillcolor=Bluartico](A_1)(A_2)(A_3)(A_4)(A_5)(A_6)(B_6)(B_5)(B_4)(B_3)(B_2)(B_1)
  \multido{\n=5+5}{12}{\psline[linecolor=lightgray](-2,\n)(55,\n)}
  \multido{\n=5+5}{10}{\psline[linecolor=lightgray](\n,-2)(\n,65)}
  \axx(-2,-2)(55,65)
  \multido{\n=1+1}6{\psline[linewidth=1pt,linecolor=blue](A_\n)(B_\n)}
  \intersection(5*-11,2)(5*223 40 div*0,1)(55,0)(55,1)a
  \intersection(5*-11,2)(5*223 40 div*0,1)(-2,0)(-2,1)b
  \psline[linestyle=dashed,linecolor=green](a)(b)
  \pcline[linestyle=none,offset=4pt](a+0.08*{a>b})(a)\lput*{:U}{\color{green}$\bm{s_2}$}
  \intersection(A_3)(B_5)(55,0)(55,1)X
  \intersection(A_3)(B_5)(-2,0)(-2,1)W
  \intersection(A_3)(B_1)(55,0)(55,1)Y
  \intersection(A_3)(B_1)(-2,0)(-2,1)Z
  \psline[linestyle=dashed,linecolor=cyan](0.5*{X+Y})(0.5*{W+Z})
  \pcline[linestyle=none,offset=4pt](0.5*{X+Y})(0.5*{X+Y}+0.08*{{0.5*{X+Y}}>{0.5*{W+Z}}})\lput*{:D}{\color{cyan}$\bm{s_1}$}
  \intersection(!0 11873 2130 div 5 mul)(!5 12466 2130 div 5 mul)(55,0)(55,1)X
  \intersection(!0 11873 2130 div 5 mul)(!5 12466 2130 div 5 mul)(-2,0)(-2,1)W
  \psline[linecolor=red](X)(W)
  \pcline[linestyle=none,offset=1pt](X)(X+0.08*{X>W})\lput*{:D}{\color{red}$\bm{s_3}$}
  \vpoint*(B_1)(A_1)(A_2)(A_3)(A_4)(A_5)(A_6)(B_6)(B_5)(B_4)(B_3)(B_2)(B_1)(A_1)
  \multido{\n=1+1}{10}{\uput*{2pt}[d](5*\n,0){\n}}
  \multido{\n=1+1}{12}{\uput*{2pt}[l](5*0,\n){\n}}
  \uput[r](-4,-5){\figura}
\end{pspicture}%
\end{center}
We observe the polygon in Figure 8:
\begin{itemize}
 \item for L1 the straight lines $P_1P_2$, $P_2P_3$, $P_3P_4$, $P_4P_5$ and $P_5P_1$ have equations: 
  \begin{gather*}
    P_1P_2\!:\,y=-7x+6\;,\quad P_2P_3\!:\,y=-x+7\;,\quad P_3P_4\!:\,y=-7x+9\;,\\
    P_4P_5\!:\,y=-9x+10\;,\quad P_5P_1\!:\,y=-3x+4\;;
  \end{gather*}
 \item the area $A$ of the surface within the polygon is:
  \begin{align*}
    A&=\int_SdS=\int_{-\frac16}^{\frac13}\int^{-x+7}_{-7x+6}dydx+\int_{\frac13}^{\frac12}\!\!\int^{-7x+9}_{-7x+6}dydx+
    \int_{\frac12}^{1}\!\!\int^{-9x+10}_{-3x+4}dydx=\\
    &=\int_{-\frac16}^{\frac13}[y]^{-x+7}_{-7x+6}dx+\int_{\frac13}^{\frac12}[y]^{-7x+9}_{-7x+6}dx+
    \int_{\frac12}^{1}[y]^{-9x+10}_{-3x+4}dx=\\
    &=\int_{-\frac16}^{\frac13}(6x+1)dx+\int_{\frac13}^{\frac12}3dx+\int_{\frac12}^{1}(6-6x)dx=\\
    &=\big[3x^2+x\big]_{-\frac16}^{\frac13}+\big[3x\big]_{\frac13}^{\frac12}+
      \big[6x-3x^2\big]_{\frac12}^{1}=\frac34+\frac12+\frac34=\bm2\,;
  \end{align*}
 \item we will find the continuous barycenter $C(m_C,b_C) $ of the polygon following the calculations:
  \begin{align*}
    m_C&=\frac1A\int_SxdS=\\
    &=\frac12\left(\int_{-\frac16}^{\frac13}\int^{-x+7}_{-7x+6}\!\!\!xdydx+\int_{\frac13}^{\frac12}\!\!\!\int^{-7x+9}_{-7x+6}\!\!xdydx+
    \int_{\frac12}^{1}\!\!\int^{-9x+10}_{-3x+4}\!\!\!xdydx\right)=\\
    &=\frac12\left(\int_{-\frac16}^{\frac13}x[y]^{-x+7}_{-7x+6}dx+\int_{\frac13}^{\frac12}x[y]^{-7x+9}_{-7x+6}dx+
    \int_{\frac12}^{1}x[y]^{-9x+10}_{-3x+4}dx\right)=\\
    &=\int_{-\frac16}^{\frac13}\left(3x^2+\frac12x\right)dx+\int_{\frac13}^{\frac12}\frac32xdx+\int_{\frac12}^{1}(3x-3x^2)dx=\\
    &=\left[x^3+\frac14x^2\right]_{-\frac16}^{\frac13}\!\!+\left[\frac34x^2\right]_{\frac13}^{\frac12}\!\!+
      \left[\frac32x^2-x^3\right]_{\frac12}^{1}=\frac1{16}+\frac5{48}+\frac14=\bm{\frac5{12}}\,,
  \end{align*}
  \begin{align*}
    b_C&=\frac1A\int_SydS=\\
    &=\frac12\left(\int_{-\frac16}^{\frac13}\int^{-x+7}_{-7x+6}\!\!\!ydydx+\int_{\frac13}^{\frac12}\!\!\!\int^{-7x+9}_{-7x+6}\!\!ydydx+
    \int_{\frac12}^{1}\!\!\int^{-9x+10}_{-3x+4}\!\!\!ydydx\right)=\\
    &=\frac14\left(\int_{-\frac16}^{\frac13}\left[y^2\right]^{-x+7}_{-7x+6}dx+\int_{\frac13}^{\frac12}\left[y^2\right]^{-7x+9}_{-7x+6}dx+
    \int_{\frac12}^{1}\left[y^2\right]^{-9x+10}_{-3x+4}dx\right)=\\
    &=\!\!\!\int_{\!-\frac16}^{\frac13}\!\!\!\!\left(\!\frac{13}4\!+\!\frac{35}2x-12x^2\!\!\right)\!dx\!+\!\!\!
    \int_{\frac13}^{\frac12}\!\!\!\left(\!\frac{45}4\!-\!\frac{21}2x\!\!\right)\!dx\!+\!\!\!
    \int_{\frac12}^{1}\!\!\!\!\left(21-39x+18x^2\right)\!dx\!=\\
    &=\left[\frac{13}4x+\frac{35}4x^2-4x^3\right]_{-\frac16}^{\frac13}\!\!\!+
    \left[\frac{45}4x-\frac{21}4x^2\right]_{\frac13}^{\frac12}\!\!+
    \left[21x-\frac{39}2x^2+6x^3\right]_{\frac12}^{1}\!=\\
    &=\frac{35}{16}+\frac{55}{48}+\frac98=\bm{\frac{107}{24}}\;;
  \end{align*}
 \item we get the straight line $s_3$ with equation $y=\dfrac{5}{12}x+\dfrac{107}{24}$ that is associated with the point $C$ (Figure 11).
\end{itemize}
Now we observe the polygon in Figure 10:
\begin{itemize}
 \item for L1 the straight lines $P_1P_2$, $P_2P_3$, $P_3P_4$ and $P_4P_1$ have equations: 
  \begin{align*}
    P_1P_2\!:\,y&=-4x+6\;,&P_2P_3\!:\,y&=-x+7\;,\;,\\
    P_3P_4\!:\,y&=-7x+9\;,&P_4P_1\!:\,y&=-9x+10\;;
  \end{align*}
 \item the area $A$ of the surface within the polygon is:
  \begin{align*}
    A&=\int_SdS=\int_{-\frac13}^{\frac13}\int^{-x+7}_{-4x+6}\!dydx+\int_{\frac13}^{\frac12}\!\!\int^{-7x+9}_{-4x+6}\!dydx+
    \int_{\frac12}^{\frac45}\!\!\int^{-9x+10}_{-4x+6}\!dydx=\\
    &=\frac23+\frac7{24}+\frac9{40}=\bm{\frac{71}{60}}\,;
  \end{align*}
 \item we will find the continuous barycenter $C(m_C,b_C) $ of the polygon following the calculations:
  \begin{align*}
    m_C&=\frac1A\int_SxdS=\\
    &=\frac{60}{71}\left(\int_{-\frac13}^{\frac13}\!\int^{-x+7}_{-4x+6}\!\!xdydx+
    \int_{\frac13}^{\frac12}\!\!\int^{-7x+9}_{-4x+6}\!\!\!xdydx+\int_{\frac12}^{\frac45}\!\!\int^{-9x+10}_{-4x+6}\!\!xdydx\!\right)=\\
    &=\frac{60}{71}\left(\frac{2}{27}+\frac{13}{108}+\frac{27}{200}\right)=\bm{\frac{593}{2130}}\,,
  \end{align*}
  \begin{align*}
    b_C&=\frac1A\int_SydS=\\
    &=\frac{60}{71}\left(\int_{-\frac13}^{\frac13}\!\int^{-x+7}_{-4x+6}\!\!ydydx+
    \int_{\frac13}^{\frac12}\!\!\int^{-7x+9}_{-4x+6}\!\!\!ydydx+\int_{\frac12}^{\frac45}\!\!\int^{-9x+10}_{-4x+6}\!\!ydydx\!\right)=\\
    &=\frac{60}{71}\left(\frac{112}{27}+\frac{659}{432}+\frac{369}{400}\right)=\bm{\frac{11873}{2130}}\,;
  \end{align*}
 \item we get the straight line $s_3$ with equation $y=\dfrac{593}{2130}x+\dfrac{11873}{2130}$ that is associated with the point $C$
  (Figure 12).
\end{itemize}
In the two examples it seems clear that the straight line that we find with the first algorithm is not so ``good'', while it seems that
the two barycenters give us two straight lines quite similar. If this fact would be always true, then we might consider more appropriate to
use the algorithm 2, at least to avoid the huge amount of calculations that must be done to find the straight line with the algorithm 3.
But this result is accidental, due to the few examples seen, and the next example is the confirmation. I end this article proposing such an
example without calculations.\\[2mm]
In Figure 13 and 14 we have:
\begin{align*}
  A_1A_2\!:\,&y=x&&P_1\left(1,0\right)		&B_1B_4\!:\,&y=11&&P_5\left(0,11\right)\\
  A_2A_3\!:\,&y=\f12x+1&&P_2\left(\f12,1\right)&B_4B_5\!:\,&y=\f12x+8&&P_6\left(\f12,8\right)\\
  A_3A_6\!:\,&y=3&&P_3\left(0,3\right)		&B_5B_6\!:\,&y=x+4&&P_7\left(1,4\right)\\
  A_6B_1\!:\,&y=-x+12&&P_4\left(-1,12\right)	&B_6A_1\!:\,&y=\f32x-\f12&&P_8\left(\f32,-\f12\right)
\end{align*}
\begin{center}
  \small%
\wnn1(1,1,11)%
\wnn2(2,2,12)%
\wnn3(4,3,12)%
\wnn4(6,2,11)%
\wnn5(8,2,12)%
\wnn6(9,3,13)%
\begin{pspicture}[c,framesep=0pt](-4,-6)(56,71)\small
  {\psset{linestyle=none,fillstyle=solid,fillcolor=Bluartico}
    \pspolygon(A_1)(A_2)(A_3)(A_4)(A_5)(A_6)(B_6)(B_5)(B_4)(B_3)(B_2)(B_1)}
  \multido{\n=5+5}{13}{\psline[linecolor=lightgray](-2,\n)(55,\n)}
  \multido{\n=5+5}{10}{\psline[linecolor=lightgray](\n,-2)(\n,70)}
  \axx(-2,-2)(55,70)
  \multido{\n=1+1}6{\psline[linewidth=1pt,linecolor=blue](A_\n)(B_\n)}
  \retta[linecolor=cyan](!1 4 div 23 4 div)
  \pcline[linestyle=none,offset=4pt](@a@+0.08*{@a@>@b@})(@a@)\lput*{:U}{\color{cyan}$\bm{s_1}$}
  \retta[linecolor=green](!7 16 div 77 16 div)
  \pcline[linestyle=none,offset=4pt](@a@+0.08*{@a@>@b@})(@a@)\lput*{:U}{\color{green}$\bm{s_2}$}
  \retta[linecolor=red](!11 46 div 267 46 div)
  \pcline[linestyle=none,offset=4pt](@a@)(@a@+0.08*{@a@>@b@})\lput*{:D}{\color{red}$\bm{s_3}$}
  \vpoint*(A_1)(A_2)(A_3)(A_4)(A_5)(A_6)(B_6)(B_5)(B_4)(B_3)(B_2)(B_1)(A_1)(A_2)
  \multido{\n=1+1}{10}{\uput*{2pt}[d](5*\n,0){\n}}
  \multido{\n=1+1}{13}{\uput*{2pt}[l](5*0,\n){\n}}
  \uput[r](-4,-5){\figura}
\end{pspicture}
  \vnode(21,0){P_1}
  \vnode(10.5,5){P_2}
  \vnode(0,15){P_3}
  \vnode(-21,60){P_4}
  \vnode(0,55){P_5}
  \vnode(10.5,40){P_6}
  \vnode(21,20){P_7}
  \vnode(31.5,-2.5){P_8}
  \pspolygon[fillstyle=solid, fillcolor=Bluartico,linearc=0.05](P_1)(P_2)(P_3)(P_4)(P_5)(P_6)(P_7)(P_8)
  {\psset{linecolor=lightgray}
    \psline(P_1|0,-6)(P_1|0,66)
    \psline(P_2|0,-6)(P_2|0,66)
    \psline(P_4|0,-6)(P_4|0,66)
    \psline(P_8|0,-6)(P_8|0,66)
    \psline(-23,0|P_2)(35,0|P_2)
    \psline(-23,0|P_8)(35,0|P_8)
    \psline(-23,0|P_3)(35,0|P_3)
    \psline(-23,0|P_4)(35,0|P_4)
    \psline(-23,0|P_5)(35,0|P_5)
    \psline(-23,0|P_6)(35,0|P_6)
    \psline(-23,0|P_7)(35,0|P_7)}
  \axmb(-23,-6)(35,66)
  \vpoint*2pt(P_2)(P_1)(P_8)(P_7)(P_6)(P_5)(P_4)(P_3)(P_2)(P_1)
  \uput*[u](P_8|0,0){$\frac32$}
  \uput*[d](P_6|0,0){$\frac12$}
  \uput*[d](P_4|0,0){$-1$}
  \uput*[l](0,0|P_4){$12$}
  {\psset{fillcolor=Bluartico}
    \uput*[u](P_7|0,0){$1$}
    \uput*[15](0,0|P_3){$3$}
    \uput*[r](0,0|P_7){$4$}
    \uput*[dl](0,0|P_5){$11$}
    \uput*[l](0,0|P_6){$8$}}
  \uput*[l](0,0|P_2){$1$}
  \uput*[l](0,0|P_8){$-\frac12$}
  \uput*[45](0,0){$O$}
  \uput{0pt}[r](-23,-9){\figura}
\end{pspicture}%
\end{center}
$$s_1\!:\,y=\f14x+\f{23}4\;,\qquad s_2\!:\,y=\f7{16}x+\f{77}{16}\;,\qquad s_3\!:\,y=\f{11}{46}x+\frac{267}{46}\;.$$
The latest example is a further confirmation of what we thought, that is that the line that we find with Algorithm 3 seems to be ``
better'' than others. At this point it would be interesting to study the set of lines in \LL with a different point of view, perhaps less
geometrical and a little bit more statistical, considering that now there's a way to know if the segments are intersected by a straight
line.
\end{document}